\newcommand{\RNum}[1]{\uppercase\expandafter{\romannumeral #1\relax}}
\newtheorem{theorem}{ Theorem}
\newtheorem{lemma}{Lemma}
\newtheorem{definition}{Definition}
\newtheorem{corollary}{Corollary}
\newtheorem{proposition}{Proposition}
\begin{document}

\begin{center}
{\Large On some properties and relations between restricted barred preferential arrangements, multi-poly-Bernoulli numbers and related numbers}\\ 
 \vspace{5mm}
{\large S.Nkonkobe, \, V.Murali}\\
\vspace{2mm}\

{\it \footnotesize Department of Mathematics (Pure \& Applied)\\ Rhodes
University \\Grahamstown 6140 South Africa\\  snkonkobe@gmail.com \:\:v.murali@ru.ac.za }    \\
\vspace{2mm}
\end{center}

\section{Abstract\label{section:1}}

The introduction of bars in-between blocks of an ordered set partition(preferential arrangement) results in a barred ordered set partition(barred preferential arrangement). Having the restriction that some blocks of barred preferential arrangements to have a maximum of one block results in restricted barred preferential arrangements. 
In this study we establish relations between number of restricted barred preferential arrangements, multi-poly-Bernoulli numbers and numbers related to multi-poly-Bernoulli numbers. We prove a periodicity property satisfied by multi-poly-Bernoulli numbers having negative index, number of restricted barred preferential arrangements and numbers related to multi-poly-Bernoulli numbers having negative index. 
\\\\\fontsize{8}{0}\selectfont     
Mathematics Subject Classifications:05A18,05A19,05A16, 2013
\\\textbf{Keywords}: Barred preferential arrangements, Restricted barred preferential arrangements, multi-poly-Bernoulli numbers.\normalsize

\newpage
\section{introduction and preliminaries\label{intrductionandprelimenaries}}
Kaneko in \cite{Kaneko:first paper on poly-Bernoulli numbers} introduced poly-Bernoulli numbers defining them as\\ $\sum\limits_{n=0}^{\infty}B^k_n\frac{m^n}{n!}=\frac{Li_{k}(1-e^{-m)}}{1-e^{-m}}$; where $Li_k(m)$ is a poly-logarithm defined as \\$Li_k(m)=\sum\limits_{s=1}^{\infty}\frac{m^s}{s^k}$ where $k\in\mathbb{Z}$. Arakawa and Kaneko in \cite{first paper on multi-poly-bernoulli numbers} further generalised poly-Bernoulli numbers to multi-poly-Bernoulli numbers with the definition 

$\sum\limits_{n=0}^{\infty}B^{(j_1,\ldots,j_b)}_n=\frac{Li_{j_1\ldots j_b}(1-e^{-m)}}{(1-e^{-m})}$, where $Li_{j_1\ldots j_b}(m)=\sum\limits_{0<s_1<\cdots<s_b}\frac{m^{s_b}}{{s_1}^{j_1}\cdots{s_b}^{j_b}}$. 

The study of preferential arrangements seems to first appear in \cite{gross:1962}, although the integer sequence its self goes far as \cite{cayley:1859}. 
Introducing bars in-between blocks of a preferential arrangement forms a barred preferential arrangement\cite{barred:2013}. Recently the authors introduced the concept of restricted barred preferential arrangements by putting some restrictions on the sections of  barred preferential arrangements\cite{nkonkobe:Nelsen-Schmidt}.

\normalsize


\large \textbf{\underline{Barred preferential arrangements:}}
\\\\The concept of preferential arrangement of an $n$ element set was generalised by Pippenger et al in \cite{barred:2013} by introduction of bars in-between blocks of a preferential arrangement. Examples of barred preferential arrangements of $X_6$ with two and three bars are respectively
\\a)\: $|\;2\quad3\quad64|\quad1\quad5$
\\b) $ 6\;|\:3\:|1\quad24|5$

With reference to the bars, the barred preferential arrangement in a) has three sections, and the barred preferential arrangement in b) has four sections (see~\cite{barred:2013}).  
\\\\\large \textbf{\underline{Restricted barred preferential arrangements:}}\normalsize

In this study we view barred preferential arrangements as a result of first placing bars then distributing elements on the sections.\begin{definition}\cite{nkonkobe:Nelsen-Schmidt}
A section of a barred preferential arrangement is a restricted section if it can only have a maximum of one block.
\end{definition}
\begin{definition}\cite{nkonkobe:Nelsen-Schmidt}
A section of a barred preferential arrangement is a free section if elements distributed to the section can be preferential arranged in any possible way.  
\end{definition} 
A barred preferential arrangement of an $n$-element set in-which a number of fixed sections are restricted sections and other sections are free sections is referred to as a restricted barred preferential arrangement (see~\cite{nkonkobe:Nelsen-Schmidt}). We denote by $p^r_j(n)$ the total number of barred preferential arrangements of an $n$-element set having $k$ bars in-which $r$ fixed sections are restricted sections and the remaining $j=k+1-r$ sections are free sections. We denote the set of these barred preferential arrangements by $G^r_j(n)$, so $|G^r_j(n)|=p^r_j(n)$.

 For fixed $r,j\in\mathbb{N}_0=\{0,1,2,\ldots\}$ the number $p^r_j(n)$ of restricted barred preferential arrangements for $n\geq0$ is generated by (see~\cite{nkonkobe:Nelsen-Schmidt}); \begin{equation}\label{equation:1}P^r_j(m)=\frac{e^{rm}}{(2-e^m)^j}\qquad\qquad j,r\in\mathbb{N}_0\end{equation}  

For the case $j=1$ the above family of generating functions is the following Nelsen and Schmidt family of generating functions (see~\cite{Nelsen:91}).
\begin{equation}\label{equation:2}P^r_1(m)=\frac{e^{rm}}{2-e^m}\qquad\qquad r\in\mathbb{N}_0\end{equation} 

In this study we establish relations between  restricted barred preferential arrangements, multi-poly-Bernoulli numbers and some numbers related to multi-poly-Bernoulli numbers.  

\section{on some properties of restricted barred preferential arrangements}
          \begin{theorem}\label{theorem:3***}For $j\in\mathbb{N}_0$ and $n,r\in\mathbb{N}$ 
          \begin{center}
          $p^r_j(n)=\sum\limits_{s=0}^{n}\binom{n}{s}r^sp^0_j(n-s)$
          \end{center}\end{theorem}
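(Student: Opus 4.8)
The plan is to read the identity off directly from the generating function in equation~(\ref{equation:1}) by exploiting the multiplicative structure of the exponential generating function. First I would observe that the generating function factorises as
\[
P^r_j(m) = \frac{e^{rm}}{(2-e^m)^j} = e^{rm}\cdot\frac{1}{(2-e^m)^j} = e^{rm}\,P^0_j(m),
\]
since setting $r=0$ in~(\ref{equation:1}) gives $P^0_j(m) = (2-e^m)^{-j}$. This isolates the entire dependence on $r$ into the single factor $e^{rm}$, which is the whole point of the decomposition.

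Next I would expand each factor as an exponential generating function. The factor $e^{rm}$ is precisely the EGF of the sequence $(r^s)_{s\geq 0}$, that is $e^{rm} = \sum_{s=0}^{\infty} r^s\,\frac{m^s}{s!}$, while by definition $P^0_j(m) = \sum_{n=0}^{\infty} p^0_j(n)\,\frac{m^n}{n!}$. Multiplying these two series and collecting the coefficient of $m^n/n!$ by the binomial (Cauchy) convolution rule for products of exponential generating functions yields
\[
\frac{e^{rm}\,P^0_j(m)\big|_{m^n}}{1/n!} = \sum_{s=0}^{n} \binom{n}{s} r^s\, p^0_j(n-s).
\]

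Finally I would equate this with the coefficient of $m^n/n!$ extracted from the left-hand side of the factorisation, namely $p^r_j(n)$ by the definition of $P^r_j(m)$, which immediately produces the claimed identity. The argument is essentially a single application of the principle that a product of exponential generating functions corresponds to a binomial convolution of the underlying coefficient sequences, so I do not anticipate a genuine obstacle. The only point requiring a moment's care is the correct identification of $e^{rm}$ as the EGF of the powers $r^s$ together with the accurate bookkeeping of the binomial coefficient $\binom{n}{s}$ in the convolution; once these are in place, comparing coefficients completes the proof.
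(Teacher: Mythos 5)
Your proof is correct, but it follows a genuinely different route from the paper's. You read the identity off the factorisation $P^r_j(m)=e^{rm}\,P^0_j(m)$ of the exponential generating function in \eqref{equation:1} and apply the standard binomial-convolution rule for products of EGFs; this is mechanical and airtight, though it leans entirely on the cited generating function \eqref{equation:1} as an external input. The paper instead argues combinatorially, directly on the objects counted: for a restricted barred preferential arrangement in $G^r_j(n)$, one chooses the $s$ elements that land in the $r$ restricted sections in $\binom{n}{s}$ ways, distributes them among those sections in $r^s$ ways (each restricted section holds at most one block, so each element simply picks a section), and preferentially arranges the remaining $n-s$ elements among the $j$ free sections in $p^0_j(n-s)$ ways; summing over $s$ gives the identity. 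The combinatorial argument is self-contained and explains \emph{why} the identity holds at the level of the arrangements themselves --- indeed it amounts to a bijective justification of the very factorisation you start from --- whereas your approach is shorter and generalises immediately to any situation where the $r$-dependence separates into an exponential factor.
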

         
         \begin{proof} On an element $\textgoth{W}\in G^r_j(n)$ we assume there are $s$ elements which are distributed among the $r$ restricted sections. The $s$ elements can be selected in $\binom{n}{s}$ ways. We can preferentially arrange the $s$ elements among the $r$ restricted sections in $r^s$ ways. We can then preferentially arrange remaining $n-s$ elements among the $j$ free sections in $p^0_j(n-s)$ ways. Taking the product and summing over $s$ we obtain the result.\end{proof}
          
         \begin{lemma}\label{lemma:1}\cite{gross:1962} For a  fixed $s\in\mathbb{N}_0$ and $n\geq1$ the following congruence holds;
         \begin{center}
         $s^{n+4}-s^{n}\:\equiv\:0\:mod\:10$
         \end{center} 
         
         \end{lemma}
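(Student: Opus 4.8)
The plan is to factor the difference as $s^{n+4}-s^n = s^n(s^4-1)$ and then to establish divisibility by $2$ and by $5$ separately, assembling the two via the Chinese Remainder Theorem at the end, since $10=2\cdot 5$ and $\gcd(2,5)=1$. This factorisation is the whole point: it isolates a fixed quantity $s^4-1$ whose arithmetic is easy to control, leaving $s^n$ to absorb the prime factors whenever $s$ itself is divisible by $2$ or $5$.

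First I would handle the factor of $2$ by splitting on the parity of $s$. If $s$ is even then $s^n$ is even for every $n\geq 1$; if $s$ is odd then $s^4-1$ is a difference of two odd numbers and hence even. In either case $s^n(s^4-1)$ is divisible by $2$. The hypothesis $n\geq 1$ is used precisely here, to guarantee that $s^n$ carries a factor of $2$ in the even case.

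Next I would handle the factor of $5$ by splitting on whether $5\mid s$. If $5\mid s$ then $5\mid s^n$ for $n\geq 1$; otherwise $\gcd(s,5)=1$ and Fermat's little theorem yields $s^4\equiv 1\pmod 5$, so that $5\mid(s^4-1)$. In either case $s^n(s^4-1)$ is divisible by $5$. Combining the two congruences, $s^n(s^4-1)$ is divisible by both $2$ and $5$ and therefore by $10$, which is exactly the assertion $s^{n+4}-s^{n}\equiv 0\pmod{10}$.

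I expect no genuine obstacle; the statement is essentially a repackaging of the familiar fact that the last decimal digit of $s^n$ is eventually periodic with period dividing $4$. The one step that requires a moment's care is the mod-$5$ argument, where I must peel off the case $5\mid s$ \emph{before} invoking Fermat's little theorem, since that theorem is available only for $s$ coprime to $5$; the mod-$2$ case is entirely elementary. Since the lemma is cited from \cite{gross:1962}, a short case analysis of this form should suffice.
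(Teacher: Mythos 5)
Your proof is correct and complete. Note that the paper itself offers no proof of this lemma at all --- it is simply quoted from Gross's 1962 paper on preferential arrangements --- so there is no internal argument to compare yours against; your factorisation $s^{n+4}-s^n=s^n(s^4-1)$, the two-case analysis modulo $2$ and modulo $5$ (with Fermat's little theorem correctly reserved for the case $\gcd(s,5)=1$), and the final combination via coprimality of $2$ and $5$ constitute the standard, fully rigorous justification, and you correctly identify where the hypothesis $n\geq 1$ is needed.
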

         \begin{lemma}\cite{gross:1962}
                   For $n\geq1$ the last digit of the sequence $p^0_1(n)$  has a four cycle.
                   \end{lemma}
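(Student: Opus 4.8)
The plan is to reduce the last digit of $p^0_1(n)$ to a fixed integer combination of $2^n,3^n,4^n$ and then invoke Lemma~\ref{lemma:1}. First I would record that, by~\eqref{equation:1} with $r=0$ and $j=1$, the numbers $p^0_1(n)$ are generated by $1/(2-e^m)$; hence they are exactly the ordinary preferential-arrangement (Fubini) numbers. Since a preferential arrangement of an $n$-set with exactly $j$ blocks is an ordered partition into $j$ nonempty blocks, counting the unordered partition ($S(n,j)$ ways, with $S(n,j)$ the Stirling number of the second kind) and then ordering the blocks ($j!$ ways) yields the block-count identity $p^0_1(n)=\sum_{j=1}^{n}j!\,S(n,j)$.

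Second, I would reduce this identity modulo $10$. Because $j!$ is divisible by $10$ for every $j\geq 5$, all such terms vanish, leaving $p^0_1(n)\equiv\sum_{j=1}^{4}j!\,S(n,j)\pmod{10}$ for $n\geq 1$ (any missing terms with $j>n$ are zero as well, so the truncated sum is valid uniformly). Substituting the closed forms $1!\,S(n,1)=1$, $2!\,S(n,2)=2^n-2$, $3!\,S(n,3)=3^n-3\cdot 2^n+3$, and $4!\,S(n,4)=4^n-4\cdot 3^n+6\cdot 2^n-4$, and collecting the coefficients of $4^n$, $3^n$, $2^n$ and the constant, I would obtain $p^0_1(n)\equiv 4^n-3\cdot 3^n+4\cdot 2^n-2\pmod{10}$.

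Third and finally, I would apply Lemma~\ref{lemma:1} with $s=2,3,4$, giving $2^{n+4}\equiv 2^n$, $3^{n+4}\equiv 3^n$ and $4^{n+4}\equiv 4^n\pmod{10}$ for $n\geq 1$; the constant term is trivially invariant under $n\mapsto n+4$. Hence the whole right-hand side, and with it the last digit of $p^0_1(n)$, is unchanged by $n\mapsto n+4$, so the last digit is periodic with period dividing four. To confirm a genuine four-cycle rather than a shorter one, I would exhibit the first four residues $p^0_1(1),\dots,p^0_1(4)\equiv 1,3,3,5\pmod{10}$, which do not repeat with period $1$ or $2$.

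The two Stirling substitutions and the closing arithmetic are routine. The one step demanding care is the modular truncation of the block-count formula: I must check both that every factorial from $5!$ onward is divisible by $10$ and that the terms dropped for $j>n$ genuinely vanish, so that the congruence $p^0_1(n)\equiv\sum_{j=1}^{4}j!\,S(n,j)\pmod{10}$ holds for all $n\geq 1$ and not merely for $n\geq 4$. Once this truncation is secured, Lemma~\ref{lemma:1} supplies the four-cycle directly.
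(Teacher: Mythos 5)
Your proof is correct, but it follows a genuinely different route from the paper. Note first that the paper itself does not prove this lemma: it cites it from Gross (1962), and its own periodicity argument appears only in the proof of the more general theorem~\ref{theorem:2***}, which recovers the lemma as the case $r=0$, $j=1$. There the authors expand the generating function \eqref{equation:1} binomially to get $p^r_j(n)=\frac{1}{2^j}\sum_{s=0}^{\infty}\binom{-j}{s}\frac{(-1)^s(r+s)^n}{2^s}$ and apply lemma~\ref{lemma:1} term by term to this \emph{infinite} series of rationals. Your argument instead uses the finite block-count identity $p^0_1(n)=\sum_{j=1}^{n}j!\,S(n,j)$ (with $S(n,j)$ the Stirling numbers of the second kind), kills all terms with $j\geq 5$ modulo $10$, and lands on the finite congruence $p^0_1(n)\equiv 4^n-3\cdot 3^n+4\cdot 2^n-2 \pmod{10}$ before invoking lemma~\ref{lemma:1}; your closed forms and the resulting coefficients check out, as do the initial residues $1,3,3,5$. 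Your route buys two things the paper's does not: every step is a congruence between integers in a finite sum, so you avoid the (unaddressed) subtlety of applying a divisibility argument termwise to an infinite series whose terms carry powers of $2$ in their denominators; and you verify that the period is genuinely $4$ rather than $1$ or $2$, a point the paper never checks. What the paper's expansion buys in exchange is uniformity: the same computation handles all $r,j$ simultaneously, whereas your Stirling truncation is tailored to $r=0$, $j=1$ and would need the generalized identity $p^r_j(n)=\sum_{s}\binom{n}{s}r^s p^0_j(n-s)$ of theorem~\ref{theorem:3***} (plus a block-count formula for $p^0_j$) to reach the general case.
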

          \begin{theorem}\label{theorem:2***}For fixed $r,j\geq0$ such that $r>0$ or $j>0$ the last digit of the sequence $p^r_j(n)$  has a four cycle for $n\geq1$.
          \end{theorem}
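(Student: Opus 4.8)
The plan is to obtain a closed form for $p^r_j(n)$ as an integer linear combination of pure powers and then to invoke Lemma \ref{lemma:1} term by term. Starting from the generating function \eqref{equation:1}, I would expand $(2-e^m)^{-j}$ by the negative binomial series, writing
\begin{equation*}
\frac{1}{(2-e^m)^j}=\sum_{l\ge0}\binom{l+j-1}{j-1}(e^m-1)^l ,
\end{equation*}
which is legitimate as an identity of formal power series because $e^m-1$ has zero constant term. Multiplying by $e^{rm}$ and using $e^{rm}(e^m-1)^l=\sum_{i=0}^l(-1)^{l-i}\binom{l}{i}e^{(r+i)m}$, I would read off the coefficient of $m^n/n!$ to obtain
\begin{equation*}
p^r_j(n)=\sum_{l\ge0}\binom{l+j-1}{j-1}\sum_{i=0}^{l}(-1)^{l-i}\binom{l}{i}(r+i)^n .
\end{equation*}
This exhibits $p^r_j(n)$ as a combination of the powers $(r+i)^n$ with coefficients that do not depend on $n$.

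First I would dispose of the apparent difficulty that the outer sum runs over all $l$. The coefficient of $m^n/n!$ in $(e^m-1)^l$ is the surjection number $l!\,S(n,l)$, so the coefficient of $m^n/n!$ in $e^{rm}(e^m-1)^l$ equals $\sum_{s}\binom{n}{s}r^s\,l!\,S(n-s,l)$, an integer multiple of $l!$. For $l\ge5$ we have $10\mid l!$, hence every such term vanishes modulo $10$, and therefore
\begin{equation*}
p^r_j(n)\equiv\sum_{l=0}^{4}\binom{l+j-1}{j-1}\sum_{i=0}^{l}(-1)^{l-i}\binom{l}{i}(r+i)^n\pmod{10}.
\end{equation*}
The right-hand side is a \emph{finite} integer combination of the powers $r^n,(r+1)^n,\ldots,(r+4)^n$, with coefficients independent of $n$.

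To finish I would apply Lemma \ref{lemma:1}. For each fixed base $s=r+i$ that lemma gives $s^{n+4}\equiv s^n\pmod{10}$ for $n\ge1$, i.e. the last digit of $s^n$ is periodic with period $4$. Since a fixed integer combination of period-$4$ sequences is again periodic with period dividing $4$, the displayed expression yields $p^r_j(n+4)\equiv p^r_j(n)\pmod{10}$ for all $n\ge1$, which is exactly the asserted four cycle. The case $j=0$ is handled directly, since \eqref{equation:1} then gives $p^r_0(n)=r^n$ and Lemma \ref{lemma:1} applies at once; the hypothesis $r>0$ or $j>0$ serves only to exclude the empty arrangement count (when $r=j=0$ the sequence is identically $0$ for $n\ge1$, trivially a four cycle).

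The step I expect to be the main obstacle is the truncation: one must justify rigorously that, modulo $10$, only the terms $l\le4$ survive, so that an \emph{a priori} infinite and binomial-coefficient-laden expression collapses to a constant-coefficient combination of powers. This rests entirely on the divisibility of the $l$-th term by $l!$, coming from the fact that every coefficient of $(e^m-1)^l$ is a multiple of $l!$; once that is secured, the remaining reduction and the appeal to Lemma \ref{lemma:1} are routine. As an alternative organisation that sidesteps the closed form, note that \eqref{equation:1} gives $(2-e^m)P^r_j(m)=P^r_{j-1}(m)$ and $e^mP^r_j(m)=P^{r+1}_j(m)$, whence the constant-coefficient relation $p^{r+1}_j(n)=2p^r_j(n)-p^r_{j-1}(n)$; because such a relation preserves period-$4$ behaviour modulo $10$, one may induct on $r$ and reduce to the base row $r=0$, where the surjection argument above (or the cited four cycle for $p^0_1(n)$) supplies the start.
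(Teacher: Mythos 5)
Your argument is correct, and it shares the paper's basic strategy: expand the generating function \eqref{equation:1} into a combination of exponentials $e^{(r+i)m}$ with coefficients independent of $n$, read off $p^r_j(n)$ as the corresponding combination of powers $(r+i)^n$, and finish by applying lemma~\ref{lemma:1} termwise. But your decomposition is genuinely different, and materially better. The paper expands about $2$, writing $(2-e^m)^{-j}=2^{-j}(1-e^m/2)^{-j}$, which produces the \emph{infinite} series in \eqref{equation:21B} whose coefficients $\binom{-j}{u-r}(-1)^{u-r}2^{-(u-r+j)}$ are dyadic rationals, not integers; there, each bracket $[u^{n+4}-u^n]$ is divisible by $10$, but concluding that the full convergent sum of real numbers is an integer multiple of $10$ from termwise divisibility is not automatic (a convergent series of dyadic rationals can have any real value), and the paper passes over this point in silence. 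You expand about $1$ instead, $(2-e^m)^{-j}=\bigl(1-(e^m-1)\bigr)^{-j}=\sum_{l\ge0}\binom{l+j-1}{j-1}(e^m-1)^l$, which keeps every coefficient integral and makes the sum over $l$ effectively finite for each $n$ (the coefficient of $m^n/n!$ in $(e^m-1)^l$ is $l!\,S(n,l)$, vanishing for $l>n$); your observation that each term with $l\geq 5$ is an integer multiple of $l!$, hence of $10$, then truncates the expression to a fixed integer combination of the five powers $(r+i)^n$, $0\le i\le 4$, to which lemma~\ref{lemma:1} applies with no further ado. So what your route buys is rigor: the entire reduction is a congruence between integers, whereas the paper's proof, as written, needs exactly this kind of repair. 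Your separate treatment of $j=0$ (where $p^r_0(n)=r^n$) is also needed, since $\binom{l+j-1}{j-1}$ is meaningless there, and the paper's statement-level hypothesis is handled correctly. The alternative induction you sketch via $p^{r+1}_j(n)=2p^r_j(n)-p^r_{j-1}(n)$ is valid but not a shortcut: its base row $r=0$, $j\geq 2$ is not in the cited literature (which covers only $p^0_1$), so it still requires your main argument.
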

      
     
    \begin{proof}\hspace*{2mm} $P^r_j(m)=\frac{e^{rm}}{(2-e^m)^j}
     \\\hspace*{25mm}=\frac{1}{2^j}\sum\limits_{s=0}^{\infty}\frac{\binom{-j}{s}(-1)^s e^{(r+s)m}}{2^s}$
     \\Hence \hspace*{6mm}$p^r_j(n)=[\frac{m^n}{n!}]P^r_j(m)=\frac{1}{2^j}\sum\limits_{s=0}^{\infty}\frac{\binom{-j}{s}(-1)^s (r+s)^n}{2^s}$.
     \\Letting $u=r+s$ we have
     \begin{equation}\label{equation:21B}p^r_j(n+4)-p^r_j(n)=\frac{1}{2^j}\sum\limits_{u=r}^{\infty}\frac{\binom{-j}{u-r}(-1)^{u-r}}{2^{u-r}}[u^{n+4}-u^n]\end{equation}
     Applying lemma~\ref{lemma:1} on \eqref{equation:21B} we obtain the result. \end{proof}

                           \begin{lemma} For $n\geq1$, $r\geq0$ 
                              \begin{center}
                              $p^r_1(n)=\sum\limits_{k=0}^{\infty}\sum\limits_{s=0}^{k}\binom{k}{s}(-1)^{s}(k-s+r)^n$
                              \end{center}
                              \end{lemma}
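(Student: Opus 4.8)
The plan is to read off $p^r_1(n)$ directly from the generating function $P^r_1(m)=e^{rm}/(2-e^m)$ given in \eqref{equation:2}, by expanding the denominator as a geometric series and then applying the binomial theorem. The crucial first move is to write the denominator as $2-e^m=1-(e^m-1)$, rather than as $2(1-e^m/2)$, so that
\begin{equation*}
\frac{1}{2-e^m}=\frac{1}{1-(e^m-1)}=\sum_{k=0}^{\infty}(e^m-1)^k .
\end{equation*}
This particular grouping is what eventually produces the clean summand $(k-s+r)^n$ with no spurious powers of $2$.

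Next I would expand each factor by the binomial theorem in the form $(e^m-1)^k=\sum_{s=0}^{k}\binom{k}{s}(-1)^{s}e^{(k-s)m}$, then multiply through by $e^{rm}$ and merge the exponentials into $e^{(k-s+r)m}$, giving
\begin{equation*}
P^r_1(m)=\sum_{k=0}^{\infty}\sum_{s=0}^{k}\binom{k}{s}(-1)^{s}e^{(k-s+r)m}.
\end{equation*}
Extracting $p^r_1(n)=n!\,[m^n]P^r_1(m)$ and using $n!\,[m^n]e^{am}=a^n$ then delivers the stated identity immediately; choosing this ordering of the binomial expansion, so that the sign attaches to the $s$-index, is what makes the target form $(-1)^s(k-s+r)^n$ appear without any further re-indexing. (Had one instead attached the sign to the complementary index, the same computation would yield $(-1)^{k-s}(s+r)^n$, which matches the claim after the substitution $s\mapsto k-s$.)

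The one point requiring care, and the step I expect to be the main obstacle, is justifying the interchange of the infinite outer summation over $k$ with the coefficient extraction. This is legitimate at the level of formal power series because $e^m-1=m+m^2/2+\cdots$ has zero constant term, so $(e^m-1)^k$ is divisible by $m^k$; hence for each fixed $n$ only the terms with $k\le n$ contribute to the coefficient of $m^n$, and the outer sum is in effect finite. I would state this observation explicitly, so that the identity $\sum_{k\ge0}(e^m-1)^k=1/(2-e^m)$ is understood as an equality of formal power series rather than a claim about numerical convergence.
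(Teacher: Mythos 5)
Your proof is correct and takes essentially the same route as the paper: the paper states this lemma without a separate proof, but proves the immediately following theorem (its generalization to arbitrary $j$) by exactly your expansion $\frac{1}{2-e^m}=\sum_{k\ge0}(e^m-1)^k$ followed by the binomial theorem and coefficient extraction, of which your argument is the $j=1$ case. Your explicit remark that $(e^m-1)^k$ is divisible by $m^k$, so the outer sum is effectively finite for each $n$, is a welcome justification that the paper leaves implicit.
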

                              \begin{theorem}For $n,j\geq1$ and $r\geq0$ 
                              \begin{center}
                               $p^r_j(n)=\sum\limits_{k=0}^{\infty}\sum\limits_{s=0}^{k}\binom{k}{s}(-1)^{s}p^{r+k-s}_{j-1}(n)$
                              \end{center}
                              \end{theorem}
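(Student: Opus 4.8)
The plan is to prove the identity at the level of exponential generating functions and then read off the coefficient of $m^n/n!$, exactly in the spirit of the proof of Theorem~\ref{theorem:2***}. Working throughout with formal power series in $m$ (equivalently, with $m$ small enough that all the series below converge), I would start from the right-hand side, replace each number $p^{r+k-s}_{j-1}(n)$ by the coefficient $[\frac{m^n}{n!}]P^{r+k-s}_{j-1}(m)=[\frac{m^n}{n!}]\frac{e^{(r+k-s)m}}{(2-e^m)^{j-1}}$ coming from \eqref{equation:1}, and show that after summation the surviving generating function is precisely $P^r_j(m)$.

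The first step is the inner sum over $s$. Pulling out the common factor $\frac{e^{rm}}{(2-e^m)^{j-1}}$ (which does not depend on $s$) leaves $\sum_{s=0}^{k}\binom{k}{s}(-1)^s e^{(k-s)m}$. Reindexing by $t=k-s$ and invoking the binomial theorem collapses this to $(-1)^k(1-e^m)^k=(e^m-1)^k$, so that the inner sum contributes $\frac{e^{rm}}{(2-e^m)^{j-1}}(e^m-1)^k$.

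The second step is the outer sum over $k$. Since the factor $\frac{e^{rm}}{(2-e^m)^{j-1}}$ is independent of $k$, what remains is the geometric series $\sum_{k=0}^{\infty}(e^m-1)^k=\frac{1}{1-(e^m-1)}=\frac{1}{2-e^m}$. Multiplying the two factors gives $\frac{e^{rm}}{(2-e^m)^{j-1}}\cdot\frac{1}{2-e^m}=\frac{e^{rm}}{(2-e^m)^j}=P^r_j(m)$, and extracting $[\frac{m^n}{n!}]$ from both sides yields the claimed recurrence. I note that the case $j=1$ is exactly the preceding Lemma (using $p^{r+k-s}_0(n)=(r+k-s)^n$, as $P^{r+k-s}_0(m)=e^{(r+k-s)m}$), so the same computation can alternatively be framed as an induction on $j$ with that Lemma as the base case.

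The main obstacle is bookkeeping rather than conceptual. One must justify interchanging the two summations with the coefficient extraction, which is where the formal power series viewpoint (equivalently, the convergence for $|e^m-1|<1$) does the work; and one must be careful with the sign in the reindexing $t=k-s$, since a misplaced factor $(-1)^k$ in the binomial collapse is the easiest way to land on $\frac{1}{e^m}$ instead of the correct geometric-series limit $\frac{1}{2-e^m}$.
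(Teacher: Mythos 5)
Your proof is correct and is essentially the same argument as the paper's: both identify the right-hand side with the expansion of $P^r_j(m)=\frac{e^{rm}}{(2-e^m)^j}$ obtained by writing $\frac{1}{2-e^m}=\sum_{k=0}^{\infty}(e^m-1)^k$ and expanding $(e^m-1)^k$ by the binomial theorem, then extracting $[\frac{m^n}{n!}]$. You merely run the computation from the right-hand side to the left and spell out the collapsing of the inner and outer sums (and the convergence bookkeeping), steps the paper asserts in a single displayed line.
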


                              \begin{proof}The theorem is a generalisation of an un-labelled equation in \cite{gross:1962}.
                       
                                   $P^r_j(m)=\frac{e^{rm}}{(2-e^m)^j}=\frac{e^{rm}}{(2-e^m)^j}
                                   \\\hspace*{14mm}=\sum\limits_{k=0}^{\infty}\sum\limits_{s=0}^{\infty}\binom{k}{s}(-1)^s\frac{e^{(r+k-s)m}}{(2-e^m)^{j-1}}$.
                                  \\Hence $p^r_j(n)=[\frac{m^n}{n!}]P^r_j(m)=\sum\limits_{k=0}^{\infty}\sum\limits_{s=0}^{k}\binom{k}{s}(-1)^{s}p^{r+k-s}_{j-1}(n)$.\end{proof}
                                 
                                 \begin{theorem} For $r,j\geq1$ such that  $r\leq j$ we have 
               \begin{center}$p^r_{j-r}(n)=\sum\limits_{s=1}^{r}\binom{r}{s}(-1)^{s+1}\times p^s_{j-s}(n)$\end{center}
               \end{theorem}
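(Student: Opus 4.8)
The plan is to proceed exactly as in the preceding theorems of this section: express the generating function of the left-hand side, rewrite it as the generating function of the right-hand side, and then read off the coefficient of $m^n/n!$. The key structural observation is that every term $p^{s}_{j-s}$ appearing in the statement has superscript and subscript summing to the fixed integer $j$. I would therefore keep $j$ fixed and regard the claim as a single relation between the power series
\[
P^{r}_{j-r}(m)=\frac{e^{rm}}{(2-e^m)^{\,j-r}}\qquad\text{and}\qquad\sum_{s=1}^{r}\binom{r}{s}(-1)^{s+1}\frac{e^{sm}}{(2-e^m)^{\,j-s}},
\]
both supplied by \eqref{equation:1}. Multiplying through by $(2-e^m)^{\,j-r}$ and using $(2-e^m)^{\,j-s}=(2-e^m)^{\,j-r}(2-e^m)^{\,r-s}$ eliminates the parameter $j$, so the whole family of identities reduces to the single case $j=r$:
\[
e^{rm}=\sum_{s=1}^{r}\binom{r}{s}(-1)^{s+1}\frac{e^{sm}}{(2-e^m)^{\,r-s}}.
\]

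Next I would clear the remaining denominators by multiplying by $(2-e^m)^{\,r-1}$, giving the polynomial identity $e^{rm}(2-e^m)^{r-1}=\sum_{s=1}^{r}\binom{r}{s}(-1)^{s+1}e^{sm}(2-e^m)^{s-1}$. Writing $G=e^m(2-e^m)$ and applying the binomial theorem to the right-hand side, both sides acquire a common factor $2-e^m$ and the statement collapses to the clean identity
\[
G^{r}+(1-G)^{r}=1.
\]
Since $1-G=(e^m-1)^2$, this reads $[e^m(2-e^m)]^{r}=1-(e^m-1)^{2r}$. Were this identity available, one would finish by extracting $[m^n/n!]$ and re-identifying each bracket as the relevant $p^{s}_{j-s}(n)$; an induction on $r$ that strips off the $s=r$ term and appeals to the previous theorem for the smaller superscripts is an alternative path to the same reduction.

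The decisive step, and the one I expect to be the true obstacle, is this last identity $G^{r}+(1-G)^{r}=1$. It is valid for $r=1$, but already fails for $r\ge 2$: with $B=e^m-1$ one has $G=1-B^2$, so $G^{r}=(1-B^2)^{r}=1-rB^2+\cdots$ while $1-(1-G)^{r}=1-B^{2r}$, and the coefficients of $B^2$ are $-r$ and $0$. As $B^2=(e^m-1)^2=m^2+\cdots$, the discrepancy first appears at order $m^2$. A direct check bears this out: for $j=r=2,\ n=2$ the left-hand side of the theorem is $p^{2}_{0}(2)=2^2=4$, whereas the right-hand side is $2\,p^{1}_{1}(2)-p^{2}_{0}(2)=2\cdot 6-4=8$. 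In fact the series $e^{sm}(2-e^m)^{s-j}$ for $s=1,\dots,r$ are linearly independent, so they satisfy no nontrivial relation with constant coefficients; I would therefore expect the hypothesis $r=1$ to be needed, or a correction to the binomial coefficients on the right, before the generating-function computation sketched above can be completed.
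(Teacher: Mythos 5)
You are right to be suspicious: the statement as printed is false for every $r\geq 2$, and your reduction pinpoints exactly why. Writing $G=e^m(2-e^m)$, the claimed identity is equivalent, after multiplying through by $(2-e^m)^{j}$, to $G^r+(1-G)^r=1$; since $1-G=(e^m-1)^2$ is a nonconstant series, this holds only for $r=1$, and the coefficient comparison shows the identity already fails at $n=2$ for every $r\geq2$, $j\geq r$. Your numerical check is also correct: $p^2_0(2)=2^2=4$, while $p^1_1(n)=\sum_{u\geq1}u^n/2^u$ gives $p^1_1(2)=6$, so the right-hand side is $2\cdot6-4=8\neq4$. There is thus no proof to recover; the identity is trivially true for $r=1$ and false otherwise.

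For comparison, the paper "proves" the theorem combinatorially, and its error is a complementation slip in an inclusion/exclusion argument. Let $A_i$ denote the set of barred preferential arrangements in $G^0_j(n)$ whose $i$-th fixed section contains at most one block, so that $|A_{i_1}\cap\cdots\cap A_{i_s}|=p^s_{j-s}(n)$. The paper correctly computes $|\overline{A_1}\cap\cdots\cap\overline{A_r}|=\sum_{s=0}^{r}(-1)^s\binom{r}{s}p^s_{j-s}(n)$, the count of arrangements in which every fixed section has more than one block. But subtracting this from $p^0_j(n)$ yields $|A_1\cup\cdots\cup A_r|$, the arrangements in which \emph{at least one} fixed section has at most one block, whereas the paper equates the difference with $|A_1\cap\cdots\cap A_r|=p^r_{j-r}(n)$, i.e. it silently replaces "at least one" by "all" when passing to the complement. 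So the alternating sum in the statement actually equals $|A_1\cup\cdots\cup A_r|$, which strictly exceeds $p^r_{j-r}(n)$ once $r\geq2$ and $n\geq2$ --- exactly the discrepancy your counterexample exhibits ($8$ versus $4$). Your linear-independence remark gives the cleanest way to see that no choice of constant coefficients other than the trivial $r=1$ case could make such a relation hold for all $n$.
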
\begin{proof} On barred preferential arrangements having $j$ free sections, we fix $r\geq1$ sections. By the inclusion/exclusion principle the number of those barred preferential arrangements from $G^0_j(n)$ such that all the $r$ fixed sections have more than one block is \\$p^0_j(n)-\binom{r}{1}p^1_{j-1}(n)+\binom{r}{2}p^2_{j-2}(n)+\cdots+\binom{r}{r}p^r_{j-r}(n)(-1)^{r} =\sum\limits_{s=0}^{r}\binom{r}{s}p^s_{j-s}(n)(-1)^{s}$.
               Hence the number of those barred preferential arrangements such that all the $r$ fixed sections have a maximum of one block is $\sum\limits_{s=1}^{r}\binom{r}{s}p^s_{j-s}(n)(-1)^{s+1}=p^r_{j-r}(n)$. \end{proof}

           \begin{lemma}\cite{Combination locks paper}For $n\geq0$
           \begin{center}
           $p^0_1(n)=\sum\limits_{s=0}^{\infty}\frac{s^n}{2^{s+1}}$
           \end{center}
           \end{lemma}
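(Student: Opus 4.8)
The plan is to read $p^0_1(n)$ off directly from its exponential generating function via a geometric series expansion, in exactly the spirit of the computation already carried out in the proof of Theorem~\ref{theorem:2***}. First I would specialise the generating function \eqref{equation:1} to the case $r=0$, $j=1$, which gives $P^0_1(m)=\frac{1}{2-e^m}$. The task then reduces to showing that the coefficient of $\frac{m^n}{n!}$ in this series is the claimed sum.

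Next I would factor the constant $2$ out of the denominator and expand as a geometric series in $e^m/2$:
\[
\frac{1}{2-e^m}=\frac{1}{2}\cdot\frac{1}{1-\tfrac{e^m}{2}}=\frac{1}{2}\sum_{s=0}^{\infty}\left(\frac{e^m}{2}\right)^s=\sum_{s=0}^{\infty}\frac{e^{sm}}{2^{s+1}}.
\]
This is the same manipulation that produced the expression $\frac{1}{2^j}\sum_{s}\frac{\binom{-j}{s}(-1)^s e^{(r+s)m}}{2^s}$ in Theorem~\ref{theorem:2***}; in the present case it collapses neatly because $\binom{-1}{s}(-1)^s=1$ for all $s$. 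It is valid as an identity of formal power series, and equally for $m$ in a neighbourhood of $0$, where $|e^m/2|<1$.

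Finally I would extract the coefficient of $\frac{m^n}{n!}$. Since $p^0_1(n)=[\frac{m^n}{n!}]P^0_1(m)$ and $[\frac{m^n}{n!}]e^{sm}=s^n$, interchanging the sum with coefficient extraction gives
\[
p^0_1(n)=\sum_{s=0}^{\infty}\frac{1}{2^{s+1}}\Big[\frac{m^n}{n!}\Big]e^{sm}=\sum_{s=0}^{\infty}\frac{s^n}{2^{s+1}},
\]
which is the desired identity. The only point meriting a word of justification is the interchange of the infinite sum with the coefficient functional; this is routine, since $\sum_{s}e^{sm}/2^{s+1}$ converges uniformly on compact neighbourhoods of the origin, so I do not expect a genuine obstacle anywhere. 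For the boundary case $n=0$ one reads the $s=0$ term with the convention $0^0=1$, so the right-hand side becomes $\sum_{s\ge0}2^{-(s+1)}=1=p^0_1(0)$, consistent with $P^0_1(0)=\frac{1}{2-1}=1$.
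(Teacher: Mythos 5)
Your proof is correct. The paper gives no proof of this lemma at all --- it is quoted from the cited combination-locks paper --- but your derivation is exactly the $r=0$, $j=1$ specialization of the expansion the paper itself performs in proving Theorem~\ref{theorem:2***}, where $\binom{-1}{s}(-1)^s=1$ collapses the binomial series to a geometric one, so your approach coincides with the paper's own generating-function technique. One quibble: calling the expansion of $\frac{1}{2-e^m}$ in powers of $e^m/2$ ``an identity of formal power series'' is loose, since $e^m/2$ has nonzero constant term and hence every term of the geometric series contributes to every coefficient of $m^n$ (the sum is not formally convergent); the analytic justification you supply --- uniform convergence on a neighbourhood of $m=0$ where $e^{\mathrm{Re}(m)}<2$, permitting termwise extraction of Taylor coefficients --- is the argument that actually carries the proof, and it is correct.
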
                       
          \begin{lemma}\cite{Nelsen:91}
            For $n\geq0$
           \begin{center}
$p^2_1(n)=2\sum\limits_{s=2}^{\infty}\frac{s^n}{2^s}$
 \end{center}
  \end{lemma}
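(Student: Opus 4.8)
The plan is to read off the coefficient of $\frac{m^n}{n!}$ directly from the generating function, exactly as in the proof of the preceding lemma for $p^0_1(n)$. Specializing equation~\eqref{equation:2} to $r=2$ gives $P^2_1(m)=\frac{e^{2m}}{2-e^m}$. First I would factor the denominator as $2-e^m=2\left(1-\frac{e^m}{2}\right)$ and expand the reciprocal as a geometric series $\frac{1}{1-e^m/2}=\sum_{s=0}^{\infty}\left(\frac{e^m}{2}\right)^s$, so that $P^2_1(m)=\frac{1}{2}\sum_{s=0}^{\infty}\frac{e^{(s+2)m}}{2^s}$. This mirrors the manipulation already used to produce the generating-function expansions in Theorem~\ref{theorem:2***}.

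Next I would extract coefficients term by term. Since $[\frac{m^n}{n!}]e^{am}=a^n$, this yields $p^2_1(n)=\frac{1}{2}\sum_{s=0}^{\infty}\frac{(s+2)^n}{2^s}$. Finally, the substitution $u=s+2$ reindexes the sum so that it begins at $u=2$, giving $p^2_1(n)=\frac{1}{2}\sum_{u=2}^{\infty}\frac{u^n}{2^{u-2}}=\frac{4}{2}\sum_{u=2}^{\infty}\frac{u^n}{2^u}=2\sum_{u=2}^{\infty}\frac{u^n}{2^u}$, which is precisely the claimed identity.

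The only delicate point is justifying the term-by-term coefficient extraction from the geometric expansion, i.e.\ that the interchange of the infinite summation with the coefficient operator $[\frac{m^n}{n!}]$ is legitimate. This is the same routine formal-power-series (or convergence) argument that underlies the preceding lemma's expression for $p^0_1(n)=\sum_{s=0}^{\infty}\frac{s^n}{2^{s+1}}$, and it presents no real obstacle; I expect the entire proof to reduce to the index shift above once the expansion is in place.
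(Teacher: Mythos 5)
Your proof is correct. The paper itself gives no proof of this lemma---it is quoted directly from Nelsen and Schmidt \cite{Nelsen:91}---but your geometric-series expansion of $\frac{1}{2-e^m}$ followed by term-by-term extraction of $[\frac{m^n}{n!}]$ is exactly the technique the paper uses for its own analogous results (the proofs of Theorem~\ref{theorem:2***} and Theorem~\ref{theorem:6***}), so your derivation both fills the gap and is consistent in method with the rest of the paper.
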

  \begin{theorem}\label{theorem:6***} For $j\geq1$ and $n,r\geq0$ 
  \begin{center}
   $p^r_j(n)=\frac{1}{2}\sum\limits_{s=0}^{\infty}\frac{ p^{r+s}_{j-1}(n)}{2^s}$
   \end{center}
   \end{theorem}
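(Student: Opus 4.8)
The plan is to manipulate the exponential generating function $P^r_j(m)=\frac{e^{rm}}{(2-e^m)^j}$ directly, peeling off a single factor of $(2-e^m)^{-1}$ and expanding it as a geometric series, exactly in the spirit of the coefficient-extraction arguments used for Theorem~\ref{theorem:2***} and the preceding theorem. First I would write, for $j\geq1$,
\[
P^r_j(m)=\frac{e^{rm}}{(2-e^m)^{j-1}}\cdot\frac{1}{2-e^m}.
\]

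Next I would expand the last factor as $\frac{1}{2-e^m}=\frac{1}{2}\cdot\frac{1}{1-e^m/2}=\frac{1}{2}\sum_{s=0}^{\infty}\frac{e^{sm}}{2^s}$, combine the exponentials via $e^{rm}e^{sm}=e^{(r+s)m}$, and recognise each resulting summand as a generating function of the same family:
\[
P^r_j(m)=\frac{1}{2}\sum_{s=0}^{\infty}\frac{1}{2^s}\cdot\frac{e^{(r+s)m}}{(2-e^m)^{j-1}}=\frac{1}{2}\sum_{s=0}^{\infty}\frac{P^{r+s}_{j-1}(m)}{2^s}.
\]
Since $j-1\geq0$, every term $P^{r+s}_{j-1}(m)$ is a legitimate member of the family~\eqref{equation:1}, so no object outside the defined range is invoked.

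Finally I would extract the coefficient of $m^n/n!$ from both sides. Using $[\frac{m^n}{n!}]P^{r+s}_{j-1}(m)=p^{r+s}_{j-1}(n)$ together with the linearity of coefficient extraction across the sum, the right-hand side becomes $\frac{1}{2}\sum_{s=0}^{\infty}\frac{p^{r+s}_{j-1}(n)}{2^s}$, while the left-hand side is $p^r_j(n)$, which is precisely the claimed identity.

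The step I expect to be the main obstacle is the justification of interchanging the infinite summation with the coefficient extraction, which is the same subtlety that underlies the geometric expansion: formally $e^m/2$ has nonzero constant term, so the series must be read analytically (for $\Re(m)<\ln 2$, where $2-e^m$ is nonvanishing) rather than as a formal power series. I would handle this exactly as the convention already adopted for Theorem~\ref{theorem:2***}, and additionally observe that the coefficientwise series that results, $\frac{1}{2}\sum_{s}p^{r+s}_{j-1}(n)/2^s$, converges for each fixed $n$ — consistent with convergent representations such as the lemma $p^0_1(n)=\sum_{s}s^n/2^{s+1}$ — so that term-by-term extraction is indeed legitimate.
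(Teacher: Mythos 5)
Your proposal is correct and is essentially identical to the paper's own proof: both peel off one factor of $(2-e^m)^{-1}$ from $P^r_j(m)=\frac{e^{rm}}{(2-e^m)^j}$, expand it as the geometric series $\frac{1}{2}\sum_{s=0}^{\infty}\frac{e^{sm}}{2^s}$, recognise each summand as $P^{r+s}_{j-1}(m)$, and extract the coefficient of $\frac{m^n}{n!}$ term by term. Your added remarks on the analytic justification of the term-by-term coefficient extraction go beyond what the paper records, but the argument itself is the same.
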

   \begin{proof}
 $P^r_j(m)=\frac{e^{rm}}{(2-e^m)^j}
 \\\hspace*{21mm}=\frac{1}{2}\sum\limits_{s=0}^{\infty}
                                                     \frac{}{2^s}\frac{e^{r+s}m}{(2-e^m)^{j-1}}$
 \\Hence \hspace*{2mm}$p^r_j(n)=[\frac{m^n}{n!}]P^r_j(m)=\frac{1}{2}\sum\limits_{s=0}^{\infty}\frac{ p^{r+s}_{j-1}(n)}{2^s}$\end{proof}

  \begin{lemma}\cite{gross:1962} For $j,n\geq1$
   \begin{center}
  $p^0_1(n)=\sum\limits_{s=1}^{n-1}\binom{n}{s}p^0_1(n-s)+1$
   \end{center}
   \end{lemma}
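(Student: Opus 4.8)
The plan is to recognise that $p^0_1(n)$ is the number of ordinary preferential arrangements (ordered set partitions) of an $n$-element set, and to derive the stated identity from the standard self-convolution recurrence by peeling off the single term in which the whole set forms one block. Concretely I would first establish
\[
p^0_1(n)=\sum_{s=1}^{n}\binom{n}{s}p^0_1(n-s)\qquad(n\ge 1),
\]
and then isolate the summand $s=n$, which equals $\binom{n}{n}p^0_1(0)=1$ by the convention $p^0_1(0)=1$ (the empty arrangement). What remains is precisely $\sum_{s=1}^{n-1}\binom{n}{s}p^0_1(n-s)+1$, the claimed formula.

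For the displayed recurrence I would give the combinatorial argument first. Take any $\textgoth{W}\in G^0_1(n)$ and look at its leftmost block. If that block has exactly $s$ elements, they can be chosen in $\binom{n}{s}$ ways, and the blocks following it form an arbitrary preferential arrangement of the remaining $n-s$ elements, contributing $p^0_1(n-s)$ possibilities. Since the first block is nonempty, $s$ ranges over $1,\dots,n$, and summing the product over $s$ yields the recurrence. This is the same block-decomposition idea already exploited in Theorem~\ref{theorem:3***}, specialised to $r=0$, $j=1$.

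Alternatively, and perhaps more cleanly for a paper that works throughout with exponential generating functions, I would argue from \eqref{equation:1}. With $r=0$, $j=1$ we have $(2-e^m)P^0_1(m)=1$, that is $2P^0_1(m)-e^mP^0_1(m)=1$. Extracting $[\tfrac{m^n}{n!}]$ and using that all Taylor coefficients of $e^m$ equal $1$ gives, for $n\ge 1$,
\[
2p^0_1(n)-\sum_{s=0}^{n}\binom{n}{s}p^0_1(n-s)=0.
\]
Separating the $s=0$ term $\binom{n}{0}p^0_1(n)=p^0_1(n)$ collapses this to $p^0_1(n)=\sum_{s=1}^{n}\binom{n}{s}p^0_1(n-s)$, after which the $s=n$ term is split off exactly as above.

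The only real subtlety, and the step I would watch most carefully, is the boundary bookkeeping that produces the constant $+1$. One must use $p^0_1(0)=1$ and make sure the extreme term (the $s=0$ term on the generating-function side, or equivalently the case where the first block is all of $\{1,\dots,n\}$) is accounted for exactly once, so that the isolated $s=n$ contribution $\binom{n}{n}p^0_1(0)$ is precisely $1$ and is not silently absorbed into the sum. Once that term is correctly extracted, the identity follows immediately with no further computation.
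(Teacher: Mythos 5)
Your proposal is correct, but note that the paper offers no proof of this lemma at all: it is quoted from Gross's 1962 paper with a citation, so there is no in-paper argument to compare against. What you supply is a sound, self-contained derivation. Both of your routes work: the first-block decomposition gives $p^0_1(n)=\sum_{s=1}^{n}\binom{n}{s}p^0_1(n-s)$ directly (and is exactly the counting style the paper uses in the proof of Theorem~\ref{theorem:3***}), while the generating-function route via $(2-e^m)P^0_1(m)=1$ from \eqref{equation:1} gives the same recurrence after isolating the $s=0$ term. Your handling of the boundary term is the right thing to be careful about, and it is done correctly: splitting off $s=n$ yields $\binom{n}{n}p^0_1(0)=1$, which is the constant $+1$ in the stated form. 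It is worth observing that the paper later proves a generalisation of this very identity (the theorem asserting $p^r_j(n)=p^{r}_{j-1}(n)+\sum_{s=0}^{n-1}\binom{n}{s}p^{r}_{j}(s)$, described there as generalising equation (9) of Gross); the paper's route to that generalisation goes through the infinite-series identity of Theorem~\ref{theorem:6***}, $p^r_j(n)=\frac{1}{2}\sum_{s=0}^{\infty}p^{r+s}_{j-1}(n)/2^s$, which is considerably more roundabout than your direct argument. Specialising that theorem to $r=0$, $j=1$ and reindexing (using $p^0_0(n)=0$ for $n\geq1$) recovers the lemma, so your proof can also be read as the clean, elementary version of what the paper does in greater generality.
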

   \begin{lemma}\cite{murali:combinatorics}\label{lemma:lemma1***} For $j,n\geq1$ 
   \begin{center}
   $p^2_1(n+1)=\sum\limits_{s=0}^{n}\binom{n+1}{s}p^2_1(s)+2^{n+1}$
    \end{center} 
   \end{lemma}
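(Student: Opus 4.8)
The plan is to work directly with the exponential generating function $P^2_1(m)=\frac{e^{2m}}{2-e^m}$ furnished by \eqref{equation:2} and to read off the recurrence by comparing coefficients of $\frac{m^N}{N!}$ on both sides of its defining functional equation. First I would clear the denominator to obtain the relation $(2-e^m)P^2_1(m)=e^{2m}$, which I rewrite in the additive form $2P^2_1(m)-e^mP^2_1(m)=e^{2m}$ so that each term is amenable to coefficient extraction.

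Next I would expand every term as an exponential generating function. Writing $P^2_1(m)=\sum_{N\ge0}p^2_1(N)\frac{m^N}{N!}$ and invoking the binomial convolution rule for products of exponential generating functions together with $e^m=\sum_{N\ge0}\frac{m^N}{N!}$, the middle term becomes $e^mP^2_1(m)=\sum_{N\ge0}\left(\sum_{s=0}^{N}\binom{N}{s}p^2_1(s)\right)\frac{m^N}{N!}$, since every coefficient of $e^m$ equals $1$. Because $e^{2m}=\sum_{N\ge0}2^N\frac{m^N}{N!}$, equating the coefficient of $\frac{m^N}{N!}$ on the two sides of the functional equation yields $2p^2_1(N)-\sum_{s=0}^{N}\binom{N}{s}p^2_1(s)=2^N$.

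The one point that requires care is the top term of the convolution sum: the summand at $s=N$ equals $\binom{N}{N}p^2_1(N)=p^2_1(N)$, so it cancels one copy of $2p^2_1(N)$ and the remaining sum collapses to run only up to $N-1$. This gives $p^2_1(N)=\sum_{s=0}^{N-1}\binom{N}{s}p^2_1(s)+2^N$, and finally I would substitute $N=n+1$ to recover precisely the stated form $p^2_1(n+1)=\sum_{s=0}^{n}\binom{n+1}{s}p^2_1(s)+2^{n+1}$. I expect no genuine obstacle here; the only bookkeeping subtlety is this cancellation of the diagonal term, which is exactly what lowers the upper limit of summation from $n+1$ to $n$ and produces the clean $2^{n+1}$ inhomogeneous term.
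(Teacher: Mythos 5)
Your proof is correct: clearing the denominator in $P^2_1(m)=\frac{e^{2m}}{2-e^m}$, expanding $e^mP^2_1(m)$ by binomial convolution, and cancelling the diagonal term $s=N$ against one copy of $2p^2_1(N)$ yields exactly the stated recurrence, and the shift $N=n+1$ is handled properly. Be aware, though, that the paper offers no proof of this lemma at all --- it is quoted from \cite{murali:combinatorics} --- so the fair comparison is with the theorem that immediately follows it in the paper, which generalizes the lemma to $p^r_j(n)=p^{r}_{j-1}(n)+\sum_{s=0}^{n-1}\binom{n}{s}p^{r}_{j}(s)$; your lemma is the case $r=2$, $j=1$, since $p^2_0(n)=[\frac{m^n}{n!}]e^{2m}=2^n$. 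The paper proves that generalization by a genuinely different route: it starts from the infinite-series identity of Theorem~\ref{theorem:6***}, namely $p^r_j(n)=\frac{1}{2}\sum_{s\geq0}2^{-s}p^{r+s}_{j-1}(n)$, and manipulates a binomial convolution of that series, which requires some bookkeeping with the index shift $s+1=k$. Your argument is more elementary and avoids infinite series entirely; moreover it generalizes verbatim, since $(2-e^m)P^r_j(m)=P^r_{j-1}(m)$ gives the paper's full theorem by the same two-line coefficient comparison, with $P^r_{j-1}$ playing the role of your $e^{2m}$. What the paper's longer route buys is that Theorem~\ref{theorem:6***} is an identity it wants for its own sake (it generalizes known series expressions for $p^0_1(n)$ and $p^2_1(n)$); what yours buys is brevity and a self-contained derivation of a result the paper only cites.
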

  
  \begin{theorem} For $j,n\geq1$
 \begin{center}
  $p^r_j(n)=p^{r}_{j-1}(n)+\sum\limits_{s=0}^{n-1}\binom{n}{s}p^{r}_{j}(s) $
  \end{center}
   \end{theorem}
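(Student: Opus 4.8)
The plan is to read the identity off the defining generating function by exploiting the factor $2-e^m$ in the denominator. Since $P^r_j(m)=\dfrac{e^{rm}}{(2-e^m)^j}$, splitting the $j$-fold denominator into one copy of $2-e^m$ and $j-1$ remaining copies yields the functional relation $(2-e^m)\,P^r_j(m)=P^r_{j-1}(m)$, which I would state first as the engine of the proof.

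Next I would expand the left-hand side as $2P^r_j(m)-e^m P^r_j(m)=P^r_{j-1}(m)$ and extract the coefficient of $\frac{m^n}{n!}$ from each term. The term $2P^r_j(m)$ contributes $2p^r_j(n)$, and $P^r_{j-1}(m)$ contributes $p^r_{j-1}(n)$. The only term requiring care is $e^m P^r_j(m)$: writing $e^m=\sum_{k\ge0}\frac{m^k}{k!}$ and $P^r_j(m)=\sum_{s\ge0}p^r_j(s)\frac{m^s}{s!}$, the exponential (binomial) convolution gives $[\frac{m^n}{n!}]\,e^mP^r_j(m)=\sum_{s=0}^n\binom{n}{s}p^r_j(s)$, since every Taylor coefficient of $e^m$ equals $1$.

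Finally I would isolate the top term of this sum, splitting $\sum_{s=0}^n\binom{n}{s}p^r_j(s)=p^r_j(n)+\sum_{s=0}^{n-1}\binom{n}{s}p^r_j(s)$, and substitute back. The coefficient identity then reads $2p^r_j(n)-p^r_j(n)-\sum_{s=0}^{n-1}\binom{n}{s}p^r_j(s)=p^r_{j-1}(n)$, and rearranging gives exactly $p^r_j(n)=p^r_{j-1}(n)+\sum_{s=0}^{n-1}\binom{n}{s}p^r_j(s)$.

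There is no serious obstacle here; the argument is a one-line functional relation followed by routine coefficient extraction. The only place demanding attention is the binomial convolution for $e^mP^r_j(m)$ together with the bookkeeping of peeling off the $s=n$ summand, which is precisely what converts the full convolution on the left into the stated sum running only up to $n-1$. Should a combinatorial rendering be preferred, the same relation $(2-e^m)P^r_j=P^r_{j-1}$ admits an inclusion/exclusion reading in which $e^mP^r_j$ accounts for arrangements after adjoining extra elements, but the generating-function route is the most direct.
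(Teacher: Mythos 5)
Your proof is correct, but it takes a genuinely different route from the paper's. The paper never writes down the functional equation $(2-e^m)P^r_j(m)=P^r_{j-1}(m)$; instead it starts from its Theorem~\ref{theorem:6***}, the expansion $p^r_j(n)=\frac{1}{2}\sum_{s=0}^{\infty}p^{r+s}_{j-1}(n)/2^s$ obtained by expanding $1/(2-e^m)$ as a geometric series, then applies a binomial transform to both sides, uses the shift identity $\sum_{m=0}^{n}\binom{n}{m}p^{r+s}_{j-1}(n-m)=p^{r+s+1}_{j-1}(n)$ together with $p^{r+s}_{j-1}(0)=1$, sums a geometric series, re-indexes, and invokes Theorem~\ref{theorem:6***} a second time to resubstitute $p^r_j(n)$ --- deliberately mirroring Gross's derivation of identity (9) in \cite{gross:1962}, of which this theorem is presented as a generalization. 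Your argument compresses all of that into multiplying by $(2-e^m)$ and extracting one coefficient: no infinite sums, no rearrangement of series, no dependence on an earlier theorem, and the peeling-off of the $s=n$ summand is the only bookkeeping step. What the paper's route buys is continuity with its surrounding development (it reuses Theorem~\ref{theorem:6***} and exhibits the lineage from Gross); what yours buys is brevity and the avoidance of the infinite-series manipulations, which the paper carries out somewhat loosely in the bracketed step \eqref{equation:3***}. Both arguments are sound and yield the stated identity, including the correct handling of the boundary term via $p^r_j(0)=1$.
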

     \begin{proof}                                 
  The theorem is a generalisation of (9) of \cite{gross:1962}.
 \\ By theorem~\ref{theorem:6***} we have \begin{equation}\label{equation:2***}
   p^r_j(n)=\frac{1}{2}\sum\limits_{s=0}^{\infty}\frac{ p^{r+s}_{j-1}(n)}{2^s}
   \end{equation}
                                  
                                  
  This implies that
  \begin{equation}\label{equation:3***}\sum\limits_{m=0}^{n-1}\binom{n}{m}p^r_j(n-m)=\frac{1}{2}\sum\limits_{s=0}^{\infty}\begin{bmatrix}{\sum\limits_{m=0}^{n}\binom{n}{m}p^{r+s}_{j-1}(n-m)\times1^{s}-1}\end{bmatrix}\frac{1}{2^s}\end{equation}

                                  
   So  
   $$\sum\limits_{m=0}^{n-1}\binom{n}{m}p^r_j(n-m)=\frac{1}{2}\sum\limits_{s=0}^{\infty}\frac{p^{r+s+1}_{j-1}(n)}{2^s}-1$$
                                  
   Letting $s+1=k$ and applying \eqref{equation:2***} we obtain
                                  $$p^r_j(n)=\sum\limits_{m=1}^{n}\binom{n}{m}p^r_j(n-m)+p^r_{j-1}(n)$$
                                  \end{proof}

\section{poly-Bernoulli numbers\label{section:poly-Bernoulli number}}

\begin{proposition}\cite{gereneratingfunctionology:1994}\label{proposition:1}
A formal power series $p(m)=\sum\limits_{n=0}^{\infty}c_n\times m^n$ has a reciprocal if $c_0\not=0$.
\end{proposition}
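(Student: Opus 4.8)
The plan is to construct the reciprocal explicitly as a second formal power series $q(m)=\sum_{n=0}^{\infty}d_n m^n$ and to determine its coefficients recursively so that the product $p(m)q(m)$ equals the formal power series $1=1+0\cdot m+0\cdot m^2+\cdots$. First I would expand the product by the Cauchy product formula,
\[
p(m)q(m)=\sum_{n=0}^{\infty}\left(\sum_{k=0}^{n}c_k\,d_{n-k}\right)m^n,
\]
and then equate coefficients of like powers of $m$ with those of $1$. This reduces the problem to solving the infinite system consisting of the constant-term equation $c_0 d_0=1$ together with the equations $\sum_{k=0}^{n}c_k d_{n-k}=0$ for every $n\geq1$.

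Next I would solve this system by induction on $n$. The constant-term equation gives $d_0=1/c_0$, and this is exactly the point at which the hypothesis $c_0\neq0$ is used: the reciprocal of $c_0$ exists precisely because $c_0$ is nonzero. For $n\geq1$, isolating the term containing $d_n$ in the $n$-th equation yields the recurrence
\[
d_n=-\frac{1}{c_0}\sum_{k=1}^{n}c_k\,d_{n-k},
\]
which expresses $d_n$ in terms of the coefficients $d_0,d_1,\ldots,d_{n-1}$ already determined at earlier stages together with the known coefficients of $p(m)$.

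The essential observation, and what I would emphasise rather than any computational difficulty, is that this recursion is well defined at every stage precisely because $c_0\neq0$: the division by $c_0$ is permissible at each step, and since the formula for $d_n$ refers only to strictly earlier coefficients, induction on $n$ produces a unique sequence $(d_n)_{n\geq0}$. Consequently $q(m)=\sum_{n=0}^{\infty}d_n m^n$ is a well-defined formal power series satisfying $p(m)q(m)=1$, so $p(m)$ has a reciprocal. I do not expect a genuine obstacle here, as the argument is the standard invertibility criterion for formal power series; the only subtlety worth stating is the converse remark that $c_0\neq0$ is also necessary, since $c_0 d_0=1$ can never hold when $c_0=0$.
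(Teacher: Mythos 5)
Your proof is correct and is exactly the argument the paper relies on: the paper cites this proposition from Wilf without proof, but the recursion you derive, $d_n=-\frac{1}{c_0}\sum_{k=1}^{n}c_k d_{n-k}$ with $d_0=1/c_0$, is precisely the formula the paper records immediately afterwards as equation \eqref{equation:4}. Nothing is missing; your induction on $n$ and the observation that $c_0\neq 0$ is both sufficient and necessary are the standard and complete justification.
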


The $n^{th}$ term of the reciprocal $\frac{1}{p(m)}=\sum\limits_{n=0}^{\infty}c^*_n\times m^n$  when it exists is given by (see~\cite{gereneratingfunctionology:1994}) 
\begin{equation}\label{equation:4}
c^*_n=\frac{-1}{c_0}\sum\limits_{s=1}^n c_s c^*_{n-s} \qquad where\quad c^*_0=\frac{1}{c_0}
\end{equation}


 A closed form for the poly-Bernoulli numbers $B^{-2}_n$ (see~\cite{Kamano:main theorem paper on multi-poly-Bernoulli numbers})

\begin{equation}\label{equation:3}
B^{-2}_n=2\times3^n-2^n \quad where\quad n\in\mathbb{N}_0
\end{equation}


 \begin{lemma}For $n\geq0$ and fixed  $j\in\mathbb{Z}$
 \begin{center}
 $B^j_n=\sum\limits_{n=0}^{\infty}\sum\limits_{s=0}^{n}\frac{1}{(s+1)^j}\sum\limits_{i=0}^{s}\binom{s}{i}(-1)^{s-i}(i-s)^n$
 \end{center}
 \end{lemma}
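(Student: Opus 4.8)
The plan is to read off $B^j_n$ directly as the coefficient of $\frac{m^n}{n!}$ in the defining generating function $\frac{Li_{j}(1-e^{-m})}{1-e^{-m}}$. First I would substitute the series $Li_j(z)=\sum_{s=1}^{\infty}\frac{z^s}{s^j}$ with $z=1-e^{-m}$ and cancel one factor of $(1-e^{-m})$ against the denominator. After the index shift $s\mapsto s+1$ this rewrites the generating function as $\sum_{s=0}^{\infty}\frac{(1-e^{-m})^{s}}{(s+1)^j}$, which already displays the factor $\frac{1}{(s+1)^j}$ that appears in the claimed formula.

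The key algebraic step is to express each power $(1-e^{-m})^{s}$ using exponentials with integer rates, so that coefficient extraction becomes immediate. I would use the factorisation $(1-e^{-m})^{s}=e^{-sm}(e^{m}-1)^{s}$ and then apply the binomial theorem to $(e^{m}-1)^{s}=\sum_{i=0}^{s}\binom{s}{i}(-1)^{s-i}e^{im}$. Multiplying back by $e^{-sm}$ collapses this to $\sum_{i=0}^{s}\binom{s}{i}(-1)^{s-i}e^{(i-s)m}$, so that the rate of each exponential is exactly $i-s$.

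From here the coefficient of $\frac{m^n}{n!}$ in $e^{(i-s)m}$ is simply $(i-s)^n$, which is precisely the quantity raised to the $n$th power in the statement. Assembling the three nested sums together with the factor $\frac{1}{(s+1)^j}$ then yields the asserted identity; note that under this coefficient extraction the outer summation runs over the index $s$ (the variable that originally indexed the polylogarithm) from $0$ to $\infty$, while $n$ stays fixed as the subscript of $B^j_n$. As a byproduct, the inner double sum equals $(-1)^{n+s}s!\,S(n,s)$ in terms of Stirling numbers of the second kind, which recovers the familiar closed form and confirms the index ranges.

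The main obstacle is bookkeeping rather than anything deep: one must justify interchanging the infinite sum over $s$ with the termwise series expansion of each $(1-e^{-m})^{s}$. Working at the level of formal power series in $m$ this is legitimate, because $1-e^{-m}=m-\frac{m^2}{2}+\cdots$ gives $(1-e^{-m})^{s}=O(m^{s})$, so for each fixed $n$ only the finitely many terms with $s\le n$ contribute to the coefficient of $m^n$. The infinite sum is therefore a well-defined formal series and the rearrangement is valid.
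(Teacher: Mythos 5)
Your proposal is correct and follows essentially the same route as the paper's own proof: expand $\frac{Li_{j}(1-e^{-m})}{1-e^{-m}}=\sum_{s=0}^{\infty}\frac{(1-e^{-m})^{s}}{(s+1)^{j}}$, apply the binomial theorem to $(1-e^{-m})^{s}$ to get $\sum_{i=0}^{s}\binom{s}{i}(-1)^{s-i}e^{(i-s)m}$, and extract the coefficient of $\frac{m^n}{n!}$. Your write-up is in fact tidier than the paper's: you correctly identify that the outer summation should run over $s$ (the misprinted statement and the paper's final display both carry a spurious $\sum_{n=0}^{\infty}$), and you justify the truncation to $s\le n$ via $(1-e^{-m})^{s}=O(m^{s})$, a point the paper leaves implicit.
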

 
\begin{proof} By definition $\sum\limits_{n=0}^{\infty} B^{j}_n\frac{m^n}{n!}=\frac{Li_{j}(1-e^{-m})}{(1-e^{m})}$
 $\implies\sum\limits_{n=0}^{\infty} B^{j}_n\frac{m^n}{n!}=\sum\limits_{s=0}^{\infty}\frac{(1-e^{-m})^{s}}{{(s+1)}^j}$.
 \\From this it follows that $B^{j}_n=\sum\limits_{s=0}^{\infty}\frac{1}{(s+1)^j}\sum\limits_{i=0}^{s}\binom{s}{i}(-1)^{s-i}\sum\limits_{n=0}^{\infty}(i-s)^n$\end{proof}

\vspace*{5mm}
We recall the family of generating functions for number of restricted barred preferential arrangements for $j=1$ is $P^r_1(m)=\frac{e^{rm}}{2-e^m}\quad (\text{where}\:\: r\in\mathbb{N}_0$). We denote by $P^r_1(m)^*$ the reciprocal of the generating function $P^r_1(m)$; we denote as $P^r_1(m)^*=\sum\limits_{n=0}^{\infty}\frac{a^r_1(n)\times m^n}{n!}$. 
\\\\We first consider  $P^3_1(m)^*=\frac{2-e^m}{e^{3m}}=\sum\limits_{n=0}^{\infty}\frac{a^3_1(n)\times m^n}{n!}$. So $a^3_1(n)=(-1)^n(2\times 3^n-2^n)$. $\implies$ $|a^3_1(n)|=2\times 3^n-2^n=B^{-2}_n$ (by \eqref{equation:3}).

By \eqref{equation:4} we have;
\begin{equation}\label{equation:6}
p^3_1(n)=\sum\limits_{s=1}^{n}\binom{n}{s}(-1)^{s+1}B^{-2}_s\times p^3_1(n-s) \quad for\quad n\geq1
\end{equation}

From the generating functions we deduce that for $r\geq3$, $j\geq1$
\begin{equation}\label{equation:5}
p^{r-3}_{j-1}(n)=\sum\limits_{s=0}^{n}\binom{n}{s}(-1)^sB^{-2}_s\times p^r_j(n-s)
\end{equation}

\vspace*{10mm}
\begin{flushleft}
\large \textbf{\underline{Multi-poly-Bernoulli numbers:}}\end{flushleft}\normalsize
\vspace*{3mm}
\begin{theorem}\label{theorem:8}\cite{Kamano:main theorem paper on multi-poly-Bernoulli numbers}For $n\geq0$ we have
\\$\sum\limits_{j_1=0}^{\infty}\sum\limits_{j_2=0}^{\infty}\cdots\sum\limits_{j_b=0}^{\infty}\times\sum\limits_{n=0}^{\infty}B^{(-j_1,\ldots,-j_b)}_n\frac{r^{j_1}_1}{j_1!}\frac{r^{j_2}_2}{j_2!}\cdots\frac{r^{j_b}_b}{j_b!}\frac{m^n}{n!}$\\$=\frac{1}{(e^{-r_1-r_2\cdots-r_b}+e^{-m}-1)(e^{-r_2\cdots-r_b}+e^{-m}-1)\cdots(e^{-r_b}+e^{-m}-1)}$
\end{theorem}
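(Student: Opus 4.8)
The plan is to collapse the left-hand side to a single multiple series by inserting the definition of the multi-poly-Bernoulli numbers and then exchanging the order of summation. Put $x=1-e^{-m}$ and $R_k=r_k+r_{k+1}+\cdots+r_b$ for $1\le k\le b$. For the negative indices occurring here the multiple polylogarithm is the genuinely convergent power series $Li_{-j_1,\ldots,-j_b}(x)=\sum_{0<s_1<\cdots<s_b}x^{s_b}\,s_1^{\,j_1}\cdots s_b^{\,j_b}$, so the defining relation of the multi-poly-Bernoulli numbers rewrites $\sum_{n\ge0}B_n^{(-j_1,\ldots,-j_b)}\frac{m^n}{n!}$ purely in terms of $x$. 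Substituting this into the theorem and collecting the $r_i$-weights, the left-hand side becomes $\frac{1}{x}\sum_{j_1,\ldots,j_b\ge0}\ \sum_{0<s_1<\cdots<s_b}x^{s_b}\prod_{i=1}^{b}\frac{(r_is_i)^{j_i}}{j_i!}$, in which the Bernoulli numbers no longer appear explicitly.

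First I would interchange the $(j_1,\ldots,j_b)$-summation with the $(s_1,\ldots,s_b)$-summation, which is legitimate by absolute convergence for $m$ near $0$ and the $r_i$ in a neighbourhood of the origin. Each index then collapses through the exponential series, $\sum_{j_i\ge0}\frac{(r_is_i)^{j_i}}{j_i!}=e^{r_is_i}$, reducing the left-hand side to the nested geometric sum $\frac{1}{x}\sum_{0<s_1<\cdots<s_b}x^{s_b}e^{r_1s_1}\cdots e^{r_bs_b}$.

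The core of the argument is to evaluate this sum in closed form. I would use the gap substitution $t_1=s_1$ and $t_i=s_i-s_{i-1}$ for $2\le i\le b$, so that the constraint $0<s_1<\cdots<s_b$ becomes $t_1,\ldots,t_b\ge1$ with $s_i=t_1+\cdots+t_i$. Tracking the exponent carried by each $t_k$ shows that the summand factors as $\prod_{k=1}^{b}w_k^{t_k}$ with $w_k=x\,e^{R_k}$, whence the multiple sum splits into independent geometric series, $\prod_{k=1}^{b}\frac{w_k}{1-w_k}=\prod_{k=1}^{b}\frac{x}{e^{-R_k}-x}$. Using $-x=e^{-m}-1$ in every denominator, this is $\prod_{k=1}^{b}\frac{x}{e^{-R_k}+e^{-m}-1}$, so the computation delivers the product of the exact factors $e^{-R_k}+e^{-m}-1=e^{-r_k-\cdots-r_b}+e^{-m}-1$ appearing on the right-hand side, up to the accumulated powers of $x=1-e^{-m}$.

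I expect the main obstacle to be precisely the bookkeeping of those powers of $1-e^{-m}$, together with attaching the tail sums $R_k$ to the correct factors. Combining the prefactor with the product leaves $\frac{(1-e^{-m})^{b-1}}{\prod_{k=1}^{b}(e^{-R_k}+e^{-m}-1)}$, so matching the clean product on the right-hand side forces one to carry the normalizing power $(1-e^{-m})^{-b}$ attached to the definition of the $b$-fold numbers, which cancels the $x^{b}$ produced in the numerator; the case $b=1$ is a useful sanity check, since there $\frac{1}{x}\sum_{s\ge1}(xe^{r_1})^{s}$ already collapses to $\frac{1}{e^{-r_1}+e^{-m}-1}$ with no leftover power. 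The interchange of summation and the geometric summation are routine once this normalization is pinned down.
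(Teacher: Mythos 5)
Your derivation is essentially correct, but there is nothing in the paper to compare it against: Theorem~\ref{theorem:8} is imported verbatim from Kamano's paper (the citation attached to the theorem statement) and the paper supplies no proof of it. Your argument --- expand $Li_{-j_1,\ldots,-j_b}(1-e^{-m})$ as $\sum_{0<s_1<\cdots<s_b}x^{s_b}s_1^{j_1}\cdots s_b^{j_b}$ with $x=1-e^{-m}$, interchange the $(j_i)$- and $(s_i)$-sums, collapse each $j_i$-sum to $e^{r_is_i}$, and then use the gap substitution $t_i=s_i-s_{i-1}$ to split the nested sum into independent geometric series $\prod_{k}\frac{xe^{R_k}}{1-xe^{R_k}}=\prod_{k}\frac{x}{e^{-R_k}+e^{-m}-1}$ --- is a complete, self-contained proof of the symmetric generating function, and the convergence caveat you give (all series absolutely convergent for $m$ and the $r_i$ near the origin, so $|xe^{R_k}|<1$) is adequate to justify the interchanges.

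The one point you must nail down, and only half did, is the normalization. The paper's introduction defines the multi-poly-Bernoulli numbers by dividing $Li_{j_1,\ldots,j_b}(1-e^{-m})$ by $(1-e^{-m})$ to the \emph{first} power, and your opening display carries the prefactor $\frac{1}{x}$ accordingly; under that convention the stated identity is false for $b\geq2$, since you are left with exactly the factor $(1-e^{-m})^{b-1}$ you computed (at $m=r_i=0$ the left side would vanish while the right side equals $1$). The theorem holds under Kamano's convention, dividing by $(1-e^{-m})^{b}$, which is also the convention the paper tacitly uses elsewhere (e.g.\ in the proof of Theorem~\ref{theorem:2B}, where the exponent $b$ appears, modulo the sign typo $1-e^{m}$ for $1-e^{-m}$, and in Theorem~\ref{theorem:4}, whose closed forms such as $B^{(-2,0)}_n=2\cdot4^n-3^n$ only hold with the $b$-th power). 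You observe at the end that matching the right-hand side ``forces'' the $(1-e^{-m})^{-b}$ normalization and that it cancels the $x^{b}$ from the $b$ geometric factors; to make this a proof rather than a reverse-engineered fix, simply start from Kamano's definition, so the prefactor is $\frac{1}{x^{b}}$ from the outset and the cancellation is exact. With that single adjustment your argument is airtight.
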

 On theorem~\ref{theorem:8} when we let $r_2=r_3=\cdots=r_b=0$ we obtain
\begin{equation}\label{equation:10}\sum\limits_{j=0}^{\infty}\sum\limits_{n=0}^{\infty}B^{(-j,\overset{b-1}{\overbrace{0,0,\cdots,0})}}_n\frac{r^j}{j!}\frac{m^n}{n!}=\frac{e^{(b-1)m}}{e^{-r}+e^{-m}-1}=\begin{pmatrix}e^{(b-1)m}\end{pmatrix}\begin{pmatrix}\frac{1}{e^{-r}+e^{-m}-1}  \end{pmatrix}\end{equation} 
\begin{corollary}For fixed $b\in\mathbb{N}$ and $j\in\mathbb{N}_0$
\begin{center}$B^{(-j,\overset{b-1}{\overbrace{0,0,\cdots,0})}}_n=\sum\limits_{s=0}^n\binom{n}{s}B^{(\overset{b-1}{\overbrace{0,0,\cdots,0})}}_s\times B^{-j}_{n-s}$\end{center}
\end{corollary}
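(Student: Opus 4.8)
The plan is to read the asserted identity straight off the factorisation of the bivariate generating function in~\eqref{equation:10}. I would write the right-hand side of~\eqref{equation:10} as the product
\[
\frac{e^{(b-1)m}}{e^{-r}+e^{-m}-1}=\Big(e^{(b-1)m}\Big)\Big(\frac{1}{e^{-r}+e^{-m}-1}\Big),
\]
interpret each factor as a known generating function, and then compare the coefficients of $\frac{r^j}{j!}$ on the two sides.

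First I would identify the second factor. Specialising~\eqref{equation:10} to $b=1$ (so that no trailing zero indices remain) gives $\frac{1}{e^{-r}+e^{-m}-1}=\sum_{j=0}^{\infty}\sum_{n=0}^{\infty}B^{-j}_n\frac{r^j}{j!}\frac{m^n}{n!}$, which is exactly the bivariate exponential generating function of the poly-Bernoulli numbers $B^{-j}_n$. Hence, for each fixed $j$, the coefficient of $\frac{r^j}{j!}$ in this factor is the single-variable series $\sum_{n}B^{-j}_n\frac{m^n}{n!}$.

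Next I would identify the first factor $e^{(b-1)m}$ as the exponential generating function in $m$ of the all-zero-index multi-poly-Bernoulli numbers $B^{(\overbrace{0,\ldots,0}^{b-1})}_s$. To obtain this, I would apply~\eqref{equation:10} with $b$ replaced by $b-1$ and then set $r=0$: only the $j=0$ term survives, so the left-hand side collapses to $\sum_s B^{(0,\overbrace{0,\ldots,0}^{b-2})}_s\frac{m^s}{s!}$, while the right-hand side becomes $e^{(b-2)m}/e^{-m}=e^{(b-1)m}$. Since the multi-index $(0,\overbrace{0,\ldots,0}^{b-2})$ consists of $b-1$ zeros, this gives $\sum_s B^{(\overbrace{0,\ldots,0}^{b-1})}_s\frac{m^s}{s!}=e^{(b-1)m}$, equivalently $B^{(\overbrace{0,\ldots,0}^{b-1})}_s=(b-1)^s$. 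I expect this identification to be the main obstacle, not because it is computationally hard but because one must track the trailing zeros carefully: the $b-1$ zeros on the right of the corollary must be matched correctly by the shift $b\mapsto b-1$ together with the extra zero produced when $r=0$, and the degenerate case $b=1$ (empty multi-index, $e^{0m}=1$) should be noted separately as trivially consistent.

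Finally I would combine the two factors. Extracting the coefficient of $\frac{r^j}{j!}$ on the left of~\eqref{equation:10} yields $\sum_n B^{(-j,\overbrace{0,\ldots,0}^{b-1})}_n\frac{m^n}{n!}$. On the right the factor $e^{(b-1)m}$ is independent of $r$, so only the second factor contributes to the $\frac{r^j}{j!}$ coefficient, and the product becomes $\big(e^{(b-1)m}\big)\big(\sum_n B^{-j}_n\frac{m^n}{n!}\big)$. Equating these two single-variable series in $m$ and reading off the coefficient of $\frac{m^n}{n!}$ through the binomial (Cauchy) product of exponential generating functions gives $B^{(-j,\overbrace{0,\ldots,0}^{b-1})}_n=\sum_{s=0}^n\binom{n}{s}B^{(\overbrace{0,\ldots,0}^{b-1})}_s\,B^{-j}_{n-s}$, which is precisely the claimed identity.
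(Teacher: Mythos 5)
Your proof is correct and takes essentially the same route as the paper: the corollary is stated there as an immediate consequence of the factorisation displayed in~\eqref{equation:10}, with the two factors read as the exponential generating function $e^{(b-1)m}=\sum_s B^{(\overset{b-1}{\overbrace{0,\ldots,0}})}_s\frac{m^s}{s!}$ and the bivariate generating function of the poly-Bernoulli numbers $B^{-j}_n$, followed by the Cauchy product. Your write-up merely makes explicit the identification of each factor (via the specialisations $b=1$ and $r=0$) that the paper leaves unstated.
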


\begin{theorem}\cite{Kamano:main theorem paper on multi-poly-Bernoulli numbers}\label{theorem:4} For a fixed $b\in\mathbb{N}$ and $j_1,j_2,\ldots,j_b\in\mathbb{N}_0$ such that $(j_1,j_2,\ldots,j_b)\not=(0,0,\ldots,0)$. Let $j=j_1+j_2+\cdots+j_b$. Then the following identity holds 
\begin{center}
$B^{(-j_1,\ldots,-j_b)}_n=\sum\limits_{s=1}^{j}\mu_s^{(j_1,\ldots,j_b)}(s+b)^n$
\end{center}
Where $\mu_s^{(j_1,\ldots,j_b)}$ are integers recursively defined in the following way
\\\RNum{1}. $\mu^{(j_1)}_{s}=(-1)^{s+j_1}s!{j_1\brace s}$
\\\RNum{2}.  $\mu_s^{(j_1,\ldots,j_{b-1},0)}=\mu_s^{(j_1,\ldots,j_{b-1})}$
\\\RNum{3}. $\mu_s^{(j_1,\ldots,j_{b-1},j_{b}+1)}=(s+b-1)\mu_{s-1}^{(j_1,\ldots,j_{b-1},j_{b})}-s\times\mu_s^{(j_1,\ldots,j_{b-1},j_{b})}$ 
\\Where $\mu^{(j_1,\ldots,j_{b-1},j_{b})}_0=\begin{cases} 1&\:\: if\:\: (j_1,\ldots,j_{b-1},j_{b})=(0,0,\ldots,0)\\
0&\:\: Otherwise 
\end{cases}$

and $\mu^{(j_1,\ldots,j_{b-1},j_{b})}_s=0$ for all $s>j$.
\end{theorem}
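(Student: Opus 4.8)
The plan is to recast the identity as an exponential generating function statement in $m$ and then to show that both sides obey the two recursions~\RNum{2} and~\RNum{3} that define the $\mu$'s. Write $f^{(j_1,\ldots,j_b)}(m)=\sum_{n\ge 0}B^{(-j_1,\ldots,-j_b)}_n\frac{m^n}{n!}$ and, following Theorem~\ref{theorem:8}, set $\Psi(r_1,\ldots,r_b;m)=\prod_{\ell=1}^{b}\bigl(e^{-\sigma_\ell}+e^{-m}-1\bigr)^{-1}$ with $\sigma_\ell=r_\ell+\cdots+r_b$, so that $\Psi=\sum f^{(j_1,\ldots,j_b)}(m)\prod_i\frac{r_i^{j_i}}{j_i!}$. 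Since $e^{(s+b)m}=\sum_n(s+b)^n\frac{m^n}{n!}$, extracting $[\frac{m^n}{n!}]$ shows the theorem is equivalent to $f^{(j_1,\ldots,j_b)}(m)=\sum_{s}\mu_s^{(j_1,\ldots,j_b)}e^{(s+b)m}$, where the $s=0$ term may be retained harmlessly because $\mu_0$ vanishes off the all-zero tuple. This generating function identity is what I would establish.

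First I would read two recursions off $\Psi$. Putting $r_b=0$ collapses the last factor to $e^{-m}$ and turns $\Psi$ into $e^{m}$ times the $(b-1)$-variable master function, giving the append-a-zero rule $f^{(j_1,\ldots,j_{b-1},0)}(m)=e^{m}f^{(j_1,\ldots,j_{b-1})}(m)$, which matches~\RNum{2}. The substantive step is the recursion raising the last index. Logarithmic differentiation gives $\partial_{r_b}\Psi=\Psi\sum_{\ell}\frac{e^{-\sigma_\ell}}{e^{-\sigma_\ell}+e^{-m}-1}$ and $\partial_m\Psi=e^{-m}\Psi\sum_{\ell}\frac{1}{e^{-\sigma_\ell}+e^{-m}-1}$. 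Writing $\frac{e^{-\sigma_\ell}}{e^{-\sigma_\ell}+e^{-m}-1}=1-(e^{-m}-1)\frac{1}{e^{-\sigma_\ell}+e^{-m}-1}$ and substituting the second relation collapses the sum to the clean identity $\partial_{r_b}\Psi=(e^{m}-1)\,\partial_m\Psi+b\,\Psi$. Since $\partial_{r_b}$ shifts the last index up by one, comparing coefficients of $\prod_i\frac{r_i^{j_i}}{j_i!}$ yields $f^{(j_1,\ldots,j_{b-1},j_b+1)}(m)=(e^{m}-1)\frac{d}{dm}f^{(j_1,\ldots,j_b)}(m)+b\,f^{(j_1,\ldots,j_b)}(m)$, precisely~\RNum{3}. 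I expect establishing this coefficient-$b$ differential recursion, and in particular pinning down its constant term, to be the main obstacle.

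It then remains to confirm that the $\mu$-side obeys the matching recursions. Setting $G^{(j_1,\ldots,j_b)}(m)=\sum_s\mu_s^{(j_1,\ldots,j_b)}e^{(s+b)m}$, a short reindexing shows that the replacement $\mu_s\mapsto(s+b-1)\mu_{s-1}-s\mu_s$ of~\RNum{3} acts on $G$ exactly as $G\mapsto(e^{m}-1)G'+bG$, while~\RNum{2} gives $G^{(\ldots,0)}=e^{m}G^{(\ldots)}$; thus $G$ satisfies the same two recursions by construction. For the base case $b=1$ I would start from the common seed $f^{(0)}(m)=e^{m}=G^{(0)}(m)$, read off $\Psi$ at $r_1=0$, and iterate~\RNum{3} with $b=1$: the closed form in~\RNum{1} satisfies $\mu_s^{(k+1)}=s\bigl(\mu_{s-1}^{(k)}-\mu_s^{(k)}\bigr)$ by the Stirling recurrence ${k+1\brace s}={k\brace s-1}+s{k\brace s}$, so $f^{(k)}=G^{(k)}$ for all $k$ by induction on $k$. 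A single induction on the pair $(b,j_b)$—reducing via~\RNum{2} when $j_b=0$ and via~\RNum{3} when $j_b>0$—then propagates $f=G$ from the seeds to every tuple, and the bound $\mu_s=0$ for $s>j$ truncates the sum at $s=j$. Integrality of the $\mu_s$ is immediate, as~\RNum{1} gives integers and~\RNum{2},~\RNum{3} preserve them.
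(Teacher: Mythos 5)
There is nothing in the paper to compare your argument against: Theorem~\ref{theorem:4} is stated purely as a citation to Kamano \cite{Kamano:main theorem paper on multi-poly-Bernoulli numbers}, and the authors give no proof of it here (they only list the specializations $B^{(-2)}_n$, $B^{(-2,0)}_n,\ldots$ that follow from it). Judged on its own merits, your proposal is correct and complete in outline (keeping your notation $\Psi$, $\sigma_\ell$, $f$, $G$). The central differential identity checks out: with $A_\ell=e^{-\sigma_\ell}+e^{-m}-1$ one has $\partial_{r_b}\Psi=\Psi\sum_{\ell=1}^{b}e^{-\sigma_\ell}/A_\ell$ and $\partial_m\Psi=e^{-m}\Psi\sum_{\ell=1}^{b}1/A_\ell$, and the rewriting $e^{-\sigma_\ell}/A_\ell=1-(e^{-m}-1)/A_\ell$ collapses the sum to $\partial_{r_b}\Psi=b\Psi-(e^{-m}-1)e^{m}\partial_m\Psi=(e^{m}-1)\partial_m\Psi+b\Psi$; extracting the coefficient of $\prod_i r_i^{j_i}/j_i!$ turns this into $f^{(j_1,\ldots,j_{b-1},j_b+1)}=(e^{m}-1)\frac{d}{dm}f^{(j_1,\ldots,j_b)}+b\,f^{(j_1,\ldots,j_b)}$, and the $r_b=0$ specialization gives $f^{(j_1,\ldots,j_{b-1},0)}=e^{m}f^{(j_1,\ldots,j_{b-1})}$. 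On the $\mu$-side, the reindexing $\sum_s(s+b)\mu_s e^{(s+b+1)m}=\sum_s(s+b-1)\mu_{s-1}e^{(s+b)m}$ shows that $(e^{m}-1)G'+bG$ has exponential coefficients $(s+b-1)\mu_{s-1}-s\mu_s$, which is exactly rule~\RNum{3}, while rule~\RNum{2} is multiplication by $e^{m}$; the common seed $f^{(0)}=e^{m}=G^{(0)}$, the Stirling-recurrence verification that the closed form~\RNum{1} obeys $\mu_s^{(k+1)}=s(\mu_{s-1}^{(k)}-\mu_s^{(k)})$, and your induction on $(b,j_b)$ (well founded, since either the trailing index drops or the trailing zero is removed) complete the identification $f=G$, with the truncation $\mu_s=0$ for $s>j$ and the vanishing of $\mu_0$ off the all-zero tuple propagating automatically through the recursions. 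The one caveat worth flagging: your argument is a reduction of this theorem to Theorem~\ref{theorem:8}, which in this paper is itself only a citation to the same source. That is logically legitimate in context, since Theorem~\ref{theorem:8} is available as an assumed result, but it means you have derived one of Kamano's theorems from another rather than given a free-standing proof; the argument is only as self-contained as Theorem~\ref{theorem:8} is.
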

\vspace{5mm}
Form the theorem we have;
$$B^{(-2)}_n=2\times3^n-2^n$$
$$B^{(-2,0)}_n=2\times4^n-3^n$$
$$B^{(-2,0,0)}_n=2\times5^n-4^n$$
$$B^{(-2,0,0,0)}_n=2\times6^n-5^n$$

Inductively,
\begin{equation}\label{equation:7}B^{(-2,\overset{b}{\overbrace{0,0,\cdots,0})}}_n=2\times(3+b)^n-(3+b-1)^n\qquad where\:\: b\in\mathbb{N}_0\end{equation}
We write the generating functions for number of restricted barred preferential arrangements for $j=1$, $r\geq3$  as 
 \begin{equation}\label{equation:13B}P^{r+b}_1(m)=\frac{e^{(3+b)m}}{2-e^m} \quad\text{where}\:\:\: b\in\mathbb{N}_0\end{equation} So $P^r_1(m)^*=\frac{2-e^m}{e^{(3+b)m}}$.  
$\implies[\frac{m^n}{n!}]P^r_1(m)^*= (-1)^n[2\times(3+b)^n-(3+b-1)^n]\\\implies|[\frac{m^n}{n!}]P^r_1(m)^*|=B^{(-2,\overset{b}{\overbrace{0,0,\cdots,0})}}_n$\hspace{3mm}(by \eqref{equation:7}).


\vspace{2mm}
So by \eqref{equation:4} we have

\begin{equation}\label{equation:8}
p^{3+b}_{1}(n)=\sum\limits_{s=1}^{n}\binom{n}{s}(-1)^{s+1}B^{(-2,\overset{b}{\overbrace{0,0,\cdots,0})}}_s\times p^{3+b}_{1}(n-s)\quad where\:\: b\in\mathbb{N}_0
\end{equation}

Where $p^{3+b}_{1}(n)$ denotes number of restricted barred preferential arrangements. 
The result in \eqref{equation:8} can equivalently be written as;
$$B^{(-2,\overset{b}{\overbrace{0,0,\cdots,0})}}_n=\sum\limits_{s=1}^{n}\binom{n}{s}p^{3+b}_1(s)\times(-1)^{n-s+1}B^{(-2,\overset{b}{\overbrace{0,0,\cdots,0})}}_{n-s}$$

On a convolution of $P^r_j(m)=\sum\limits_{n=0}^{\infty}\frac{p^r_j(n)\times m^n}{n!}$  and  \\$P^r_1(m)^*=\frac{2-e^m}{e^{(3+b)m}}=\sum\limits_{n=0}^{\infty}(-1)^n B^{(-2,\overset{b}{\overbrace{0,0,\cdots,0})}}_{n}\frac{m^n}{n!}$ we obtain;

\begin{equation}\label{equation:9}
p^{r-(3+b)}_{j-1}(n)=\sum\limits_{s=0}^{n}\binom{n}{s}p^r_j(s)\times(-1)^{n-s}\times B^{(-2,\overset{b}{\overbrace{0,0,\cdots,0})}}_{n-s}
\hspace*{0mm}\:\:\:\: \text{for}\:\: r\geq3+b, j\geq1\end{equation}

\begin{lemma}\label{lemma:2B}For $n\geq1$ and fixed $b\in\mathbb{N}_0$ the last digit of the sequence 

$B^{(-2,\overset{b}{\overbrace{0,0,\cdots,0})}}_n$ has a four cycle.
\end{lemma}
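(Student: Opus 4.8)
The plan is to exploit the explicit closed form \eqref{equation:7}, which writes the number $B^{(-2,\overset{b}{\overbrace{0,0,\cdots,0})}}_n$ as a signed combination of two pure powers, and then to reduce modulo $10$ by invoking Lemma~\ref{lemma:1}. This is exactly the strategy already carried out for the restricted barred preferential arrangements in Theorem~\ref{theorem:2***}, so I would run the same machine on the present sequence.

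First I would recall from \eqref{equation:7} that for fixed $b\in\mathbb{N}_0$,
$$B^{(-2,\overset{b}{\overbrace{0,0,\cdots,0})}}_n = 2(3+b)^n-(3+b-1)^n.$$
To prove the sequence has a four cycle in its last digit for $n\geq1$, it is enough to show that $B^{(-2,\overset{b}{\overbrace{0,0,\cdots,0})}}_{n+4}\equiv B^{(-2,\overset{b}{\overbrace{0,0,\cdots,0})}}_n \pmod{10}$, that is, that the difference of terms four apart vanishes modulo $10$.

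Next I would substitute the closed form into this difference and split it along the two bases:
$$B^{(-2,\overset{b}{\overbrace{0,0,\cdots,0})}}_{n+4}-B^{(-2,\overset{b}{\overbrace{0,0,\cdots,0})}}_n = 2\big[(3+b)^{n+4}-(3+b)^n\big]-\big[(3+b-1)^{n+4}-(3+b-1)^n\big].$$
Then I would apply Lemma~\ref{lemma:1} twice, once with $s=3+b$ and once with $s=3+b-1$; since that lemma holds for every fixed $s\in\mathbb{N}_0$ and all $n\geq1$, each bracketed quantity is $\equiv 0 \pmod{10}$. Multiplying by the integer coefficients $2$ and $-1$ preserves divisibility by $10$, so the entire right-hand side is $\equiv 0\pmod{10}$, which is precisely the claimed periodicity.

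There is no genuine obstacle here: the only thing to verify is that the two bases $3+b$ and $3+b-1$ are nonnegative integers, which is immediate for $b\in\mathbb{N}_0$, so Lemma~\ref{lemma:1} applies to both without modification. The result then follows at once, in direct analogy with the argument of Theorem~\ref{theorem:2***}.
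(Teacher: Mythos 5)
Your proposal is correct and is essentially identical to the paper's own proof: both substitute the closed form \eqref{equation:7}, split the difference $B^{(-2,\overset{b}{\overbrace{0,0,\cdots,0})}}_{n+4}-B^{(-2,\overset{b}{\overbrace{0,0,\cdots,0})}}_n$ into the two bracketed power differences with bases $3+b$ and $2+b$, and apply Lemma~\ref{lemma:1} to each. No gaps; nothing to add.
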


\begin{proof}By \eqref{equation:7} we have $B^{(-2,\overset{b}{\overbrace{0,0,\cdots,0})}}_n=2\times(3+b)^n-(3+b-1)^n$.
So $B^{(-2,\overset{b}{\overbrace{0,0,\cdots,0})}}_{n+4}-B^{(-2,\overset{b}{\overbrace{0,0,\cdots,0})}}_n=2[(3+b)^{n+4}-(3+b)^n] -[(2+b)^{n+4}-(2+b)^n]$. By lemma~\ref{lemma:1} both $[(3+b)^{n+4}-(3+b)^n]$ and $[(2+b)^{n+4}-(2+b)^n]$ are divisible by 10.\end{proof}
\begin{theorem}\label{theorem:2B}
For fixed $j_1,j_2,\ldots,j_b\in\mathbb{N}_0$ the last digit of the sequence
\\$B^{(-j_1,\ldots,-j_b)}_n$ for $n\geq1$ has a four cycle.
\end{theorem}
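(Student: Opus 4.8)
The plan is to exploit Kamano's closed form (Theorem~\ref{theorem:4}), which expresses each negatively-indexed multi-poly-Bernoulli number as a \emph{finite} $\mathbb{Z}$-linear combination of $n$-th powers of fixed integers, and then to reduce the periodicity claim to Lemma~\ref{lemma:1} exactly as was done for the single special case in Lemma~\ref{lemma:2B}. Since Theorem~\ref{theorem:4} requires $(j_1,\ldots,j_b)\neq(0,\ldots,0)$, I would split the argument into two cases.

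For the principal case $(j_1,\ldots,j_b)\neq(0,\ldots,0)$, set $j=j_1+\cdots+j_b$ and invoke Theorem~\ref{theorem:4} to write $B^{(-j_1,\ldots,-j_b)}_n=\sum_{s=1}^{j}\mu_s^{(j_1,\ldots,j_b)}(s+b)^n$, where the coefficients $\mu_s^{(j_1,\ldots,j_b)}$ are integers that do \emph{not} depend on $n$. Replacing $n$ by $n+4$ and subtracting gives
\[
B^{(-j_1,\ldots,-j_b)}_{n+4}-B^{(-j_1,\ldots,-j_b)}_n=\sum_{s=1}^{j}\mu_s^{(j_1,\ldots,j_b)}\bigl[(s+b)^{n+4}-(s+b)^n\bigr].
\]
Each bracket is divisible by $10$ by Lemma~\ref{lemma:1} applied to the fixed base $s+b\in\mathbb{N}_0$ (for $n\geq1$), and multiplying by the integer $\mu_s^{(j_1,\ldots,j_b)}$ preserves this divisibility; hence the whole sum vanishes modulo $10$. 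Thus $B^{(-j_1,\ldots,-j_b)}_{n+4}\equiv B^{(-j_1,\ldots,-j_b)}_n\pmod{10}$ for $n\geq1$, which is precisely the four-cycle assertion for the last digit.

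The only index not covered by the above is the degenerate $(j_1,\ldots,j_b)=(0,\ldots,0)$, which I would dispatch directly from Theorem~\ref{theorem:8}. Setting $r_1=\cdots=r_b=0$ collapses the left-hand multiple sum to its $j_1=\cdots=j_b=0$ term and turns each factor on the right into $e^{-m}$, so that $\sum_{n=0}^{\infty}B^{(0,\ldots,0)}_n\frac{m^n}{n!}=e^{bm}$ and therefore $B^{(0,\ldots,0)}_n=b^n$. The periodicity then follows once more from Lemma~\ref{lemma:1} with fixed base $s=b$.

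I expect no genuine obstacle here: the heavy lifting is already done by Theorem~\ref{theorem:4}, and the decisive feature is that the $\mu_s$ are $n$-independent integers while the bases $s+b$ are fixed, so the mod-$10$ periodicity of each individual power (Lemma~\ref{lemma:1}) transfers immediately to their integer combination. The one point requiring care is simply not to overlook the all-zeros index excluded from Theorem~\ref{theorem:4}, which is why I isolate it and handle it via the generating function of Theorem~\ref{theorem:8}.
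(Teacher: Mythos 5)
Your argument is correct, but it is not the route the paper takes: the paper never invokes Kamano's closed form (Theorem~\ref{theorem:4}) in this proof. Instead it works directly from the definition, writing $\frac{Li_{-j_1,\ldots,-j_b}(1-e^{-m})}{(1-e^{-m})^b}=\sum_{0<s_1<\cdots<s_b}s_1^{j_1}\cdots s_b^{j_b}\,(1-e^{-m})^{s_b-b}$, expanding each $(1-e^{-m})^{s_b-b}$ binomially, and reading off $B^{(-j_1,\ldots,-j_b)}_n$ as a sum of terms of the form (an $n$-independent coefficient)$\times(-1)^n i^n$; the difference $B^{(-j_1,\ldots,-j_b)}_{n+4}-B^{(-j_1,\ldots,-j_b)}_n$ then carries a factor $[i^{n+4}-i^n]$ in every term (the stray sign $(-1)^n$ is harmless since $(-1)^{n+4}=(-1)^n$), and Lemma~\ref{lemma:1} gives divisibility by $10$ termwise. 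Your route trades this infinite formal expansion for the finite sum $\sum_{s=1}^{j}\mu_s^{(j_1,\ldots,j_b)}(s+b)^n$ with $n$-independent integer coefficients, which is tighter and more clearly rigorous: there is no manipulation of infinite multiple series (in the paper's version one should really note that only the finitely many tuples with $s_b\le n+b$ contribute to the coefficient of $m^n$, a point the write-up glosses over), and the termwise application of Lemma~\ref{lemma:1} is immediate. The cost is exactly the case split you identified: Theorem~\ref{theorem:4} excludes the all-zero index, and your separate treatment of $(j_1,\ldots,j_b)=(0,\ldots,0)$ via Theorem~\ref{theorem:8} (setting all $r_i=0$ collapses the right-hand side to $e^{bm}$, so $B^{(0,\ldots,0)}_n=b^n$, and Lemma~\ref{lemma:1} applies) is correct and necessary, whereas the paper's generating-function expansion covers all index tuples uniformly and needs no split.
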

\begin{proof}By definition $\sum\limits_{n=0}^{\infty} B^{(-j_1,\ldots,-j_b)}_n\frac{m^n}{n!}=\frac{Li_{-j_1,\ldots,-j_b}(1-e^{-m})}{(1-e^{m})^b}=\sum\limits_{0<s_1<s_2<\cdots<s_b}{s_1}^{j_1}\times {s_2}^{j_2}\times\cdots\times {s_b}^{j_b}(1-e^{-m})^{s_b-b}$

$\implies
B^{(-j_1,\ldots,-j_b)}_n=\sum\limits_{0<s_1<s_2<\cdots<s_b}{s_1}^{j_1}\times\cdots\times {s_b}^{j_b}\times\\(-1)^{s_b-b}\sum\limits_{i=0}^{s_b-b}\binom{s_b-b}{i}(-1)^{s_b-b-1}(-1)^n i^n$.
\\$\implies$ $B^{(-j_1,\ldots,-j_b)}_{n+4}-B^{(-j_1,\ldots,-j_b)}_n=\sum\limits_{0<s_1<s_2<\cdots<s_b}{s_1}^{j_1}\times\\ {s_2}^{j_2}\times\cdots\times {s_b}^{j_b}(-1)^{s_b-b}\sum\limits_{i=0}^{s_b-b}\binom{s_b-b}{i}(-1)^{s_b-b-1}(-1)^n [i^{n+4}-i^n]$.
\\ By applying lemma~\ref{lemma:1} we obtain the result.\end{proof}

\begin{theorem} For fixed $b\in\mathbb{N}_0$ we consider barred preferential arrangements of $X_n$ having $3+b$ bars where all the sections are restricted sections. For fixed sections the $i^{th}$ and the $j^{th}$ the poly-Bernoulli number 
$B^{(-2,\overset{b}{\overbrace{0,0,\cdots,0})}}_n$ is the number of restricted barred preferential arrangements such that the $i^{th}$ or $j^{th}$ section is empty. 
\end{theorem}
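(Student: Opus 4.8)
The plan is to translate the combinatorial description directly into a counting problem and then apply the inclusion--exclusion principle. First I would pin down the ambient object: a barred preferential arrangement of $X_n$ with $3+b$ bars has $4+b$ sections, and since every section is restricted each section holds at most one block. Consequently such an arrangement is nothing more than an assignment of each of the $n$ elements to one of the $4+b$ sections, the elements landing in a common section forming that section's single block. In generating-function terms each restricted section contributes the factor $e^m$ (it is either empty or carries a single block), so by \eqref{equation:1} the total count is recorded by $P^{4+b}_0(m)=e^{(4+b)m}$, giving $p^{4+b}_0(n)=(4+b)^n$.

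Next I would count the two single events. Let $A_i$ (respectively $A_j$) denote the set of such arrangements in which the $i^{th}$ (respectively $j^{th}$) section receives no element. Forcing one restricted section to be empty replaces its factor $e^m$ by $1$, leaving the generating function $e^{(3+b)m}$, whence $|A_i|=|A_j|=(3+b)^n$. Forcing both of the fixed sections to be empty leaves $e^{(2+b)m}$, so $|A_i\cap A_j|=(2+b)^n$.

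Finally I would assemble these by inclusion--exclusion: the number of arrangements in which the $i^{th}$ or $j^{th}$ section is empty is
$$|A_i\cup A_j|=|A_i|+|A_j|-|A_i\cap A_j|=2(3+b)^n-(2+b)^n.$$
Comparing this with the closed form \eqref{equation:7}, namely $B^{(-2,\overset{b}{\overbrace{0,0,\cdots,0})}}_n=2\times(3+b)^n-(3+b-1)^n$, and noting that $3+b-1=2+b$, the two expressions coincide, which is exactly the assertion.

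There is no serious obstacle in this argument; the only point requiring care is the reduction of the restricted-section count to a plain power of the number of nonempty sections. This hinges on the observation that a restricted section contributes the exponential generating factor $e^m$ rather than the ordered-Bell factor $\tfrac{1}{2-e^m}$ of a free section, so that emptying a section simply deletes one such factor. Once that is established the result is a one-line inclusion--exclusion matched against the already-proved formula \eqref{equation:7}.
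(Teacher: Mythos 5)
Your proposal is correct and follows essentially the same route as the paper: both arguments count arrangements with the $i^{th}$ section empty as $(3+b)^n$, with both fixed sections empty as $(2+b)^n$, apply inclusion--exclusion to get $2(3+b)^n-(2+b)^n$, and match this against the closed form \eqref{equation:7}. Your added justification of the individual counts via the generating-function factor $e^m$ per restricted section is a minor (and welcome) elaboration of steps the paper simply asserts.
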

\begin{proof}We consider restricted barred preferential arrangements of $X_n$ having $3+b$ bars where all the sections are restricted sections. We fix two sections (the $i^{th}$ and the $j^{th}$ sections). The number of those restricted barred preferential arrangements whose $i^{th}$ section is empty is $(3+b)^n$. The number of those restricted barred preferential arrangements whose $j^{th}$ section is empty is also $(3+b)^n$. The number of those restricted barred preferential arrangements whose $i^{th}$ and $j^{th}$ section are empty is $((3+b)-1)^n$.  
By the inclusion/exclusion principle the number of restricted barred preferential arrangements whose $i^{th}$ or $j^{th}$ sections is empty, is $2\times (3+b)^{n}-(3+b-1)^{n}$ where $b\geq-2$. 
Hence on \eqref{equation:7} the number $2\times(3+b)^n-(3+b-1)^n=B^{(-2,\overset{b}{\overbrace{0,0,\cdots,0})}}_n$ is the number of restricted barred preferential arrangements of $X_n$ having $3+b$ bars; where all the sections are restricted sections such that the $i^{th}$ or $j^{th}$ section is empty.  \end{proof}

\section{On some related numbers}
 We define numbers $U^{(j_1,\ldots,j_b)}_n$ by the generating function \\ 
 $ \sum\limits_{n=0}^{\infty} U^{(j_1,\ldots,j_b)}_n\frac{m^n}{n!}=\frac{Li_{j_1,\ldots,j_b}(1-e^{-m})}{(1-e^{m})^b}e^{-m}$ (For the case $b=1$ the numbers appears~in~\cite{first paper on multi-poly-bernoulli numbers})

  By section~\ref{section:poly-Bernoulli number} we have 
  \begin{equation}\label{equation:11B}\sum\limits_{n=0}^{\infty}B^{(-2,\overset{b}{\overbrace{0,0,\cdots,0})}}_n\frac{m^n}{n!}=\frac{2-e^m}{e^{(3+b)m}}\end{equation} By definition of $U^{(j_1,\ldots,j_b)}_n$ we have \begin{equation}\label{equation:12B}\sum\limits_{n=0}^{\infty}U^{(-2,\overset{b}{\overbrace{0,0,\cdots,0})}}_n\frac{m^n}{n!}=\frac{2-e^m}{e^{(3+b+1)m}}\end{equation}
  
  By  \eqref{equation:13B} and \eqref{equation:12B} we have
    \begin{equation}
     p^{3+b+1}_{1}(n)=\sum\limits_{s=1}^{n}\binom{n}{s}(-1)^{s+1}U^{(-2,\overset{b}{\overbrace{0,0,\cdots,0})}}_s\times p^{3+b+1}_{1}(n-s)
     \end{equation}

  From \eqref{equation:11B} we deduce that  $U^{(-2,\overset{b}{\overbrace{0,0,\cdots,0})}}_n=B^{(-2,\overset{b+1}{\overbrace{0,0,\cdots,0})}}_n$. Hence for fixed $b\in\mathbb{N}_0$ the sequence $U^{(-2,\overset{b}{\overbrace{0,0,\cdots,0})}}_n$ for $n\geq1$ has a four cycle (by lemma~\ref{lemma:2B}).

  \begin{theorem}For fixed $j_1,j_2,\ldots,j_b\in\mathbb{N}_0$ the last digit of the sequence
  \\$U^{(-j_1,\ldots,-j_b)}_n$ for $n\geq1$ has a four cycle.
  \end{theorem}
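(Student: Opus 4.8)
The plan is to mirror the proof of Theorem~\ref{theorem:2B} almost verbatim, the sole structural difference being the extra factor $e^{-m}$ that distinguishes the generating function of $U^{(-j_1,\ldots,-j_b)}_n$ from that of $B^{(-j_1,\ldots,-j_b)}_n$. I would begin from the definition
\[
\sum_{n=0}^{\infty} U^{(-j_1,\ldots,-j_b)}_n\frac{m^n}{n!}=\frac{Li_{-j_1,\ldots,-j_b}(1-e^{-m})}{(1-e^{m})^b}\,e^{-m},
\]
and expand the poly-logarithm exactly as in Theorem~\ref{theorem:2B}, so that the right-hand side becomes $\sum_{0<s_1<\cdots<s_b}s_1^{j_1}\cdots s_b^{j_b}(1-e^{-m})^{s_b-b}e^{-m}$.

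The crucial observation is that expanding $(1-e^{-m})^{s_b-b}$ by the binomial theorem and then multiplying by $e^{-m}$ sends each exponential $e^{-im}$ to $e^{-(i+1)m}$, whose coefficient of $m^n/n!$ is $(-1)^n(i+1)^n$. Extracting $[m^n/n!]$ therefore yields a formula identical in shape to the one for $B^{(-j_1,\ldots,-j_b)}_n$, but with the base $i$ replaced by $i+1$:
\[
U^{(-j_1,\ldots,-j_b)}_n=\sum_{0<s_1<\cdots<s_b}s_1^{j_1}\cdots s_b^{j_b}\sum_{i=0}^{s_b-b}\binom{s_b-b}{i}(-1)^i(-1)^n(i+1)^n.
\]
Next I would form the difference $U^{(-j_1,\ldots,-j_b)}_{n+4}-U^{(-j_1,\ldots,-j_b)}_n$; since $(-1)^{n+4}=(-1)^n$, every sign and binomial factor survives unchanged, and inside the triple sum one is left precisely with the brackets $(i+1)^{n+4}-(i+1)^n$. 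Applying Lemma~\ref{lemma:1} with fixed base $s=i+1$ makes each such bracket divisible by $10$, hence the whole difference is divisible by $10$, giving $U^{(-j_1,\ldots,-j_b)}_{n+4}\equiv U^{(-j_1,\ldots,-j_b)}_n\pmod{10}$ for $n\geq1$, which is the asserted four-cycle.

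I expect no serious obstacle, since the computation is a direct parallel of Theorem~\ref{theorem:2B}. The only point demanding care is the bookkeeping forced by the factor $e^{-m}$: one must check that it merely shifts the exponent of each exponential from $-im$ to $-(i+1)m$, i.e. the base from $i$ to $i+1$, while leaving the binomial coefficients and the outer summation over $0<s_1<\cdots<s_b$ intact, so that Lemma~\ref{lemma:1} can still be invoked term by term.
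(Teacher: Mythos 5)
Your proof is correct, and it rests on the same two pillars as the paper's own argument: expanding the polylogarithm so that the generating function becomes $\sum_{0<s_1<\cdots<s_b}s_1^{j_1}\cdots s_b^{j_b}(1-e^{-m})^{s_b-b}e^{-m}$, and then applying Lemma~\ref{lemma:1} termwise to the difference of the $(n+4)$-th and $n$-th coefficients. The only divergence is in how the factor $e^{-m}$ is processed. The paper, reusing the device of Theorem~\ref{theorem:3B}, writes $(1-e^{-m})^{s_b-b}e^{-m}=\frac{1}{s_b-b+1}\frac{d}{dm}(1-e^{-m})^{s_b-b+1}$, expands the $(s_b-b+1)$-st power binomially, and ends with inner terms involving $\binom{s_b-b+1}{i}(-i)^{n+1}$, so that Lemma~\ref{lemma:1} is invoked with exponent $n+1$ on $(-i)^{n+5}-(-i)^{n+1}$. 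You instead multiply the binomial expansion of $(1-e^{-m})^{s_b-b}$ by $e^{-m}$ directly, shifting each exponential $e^{-im}$ to $e^{-(i+1)m}$, which produces inner terms $\binom{s_b-b}{i}(-1)^i(i+1)^n$ and lets you apply Lemma~\ref{lemma:1} with base $i+1$ and exponent $n$. The two computations are equivalent, but yours is slightly cleaner: it avoids the differentiation step and the attendant sign bookkeeping (the paper's displayed formulas contain sign slips and stray summations over $n$ that your version never generates). In both arguments one point is left implicit and is worth stating: for fixed $n$ only the terms with $s_b-b\le n$ contribute (the inner alternating sum vanishes otherwise, since $(1-e^{-m})^{s_b-b}e^{-m}$ has order $s_b-b$ in $m$), so the outer sum is finite and the termwise divisibility by $10$ legitimately transfers to $U^{(-j_1,\ldots,-j_b)}_{n+4}-U^{(-j_1,\ldots,-j_b)}_n$.
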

 \begin{proof}By definition $ \sum\limits_{n=0}^{\infty} U^{(-j_1,\ldots,-j_b)}_n\frac{m^n}{n!}=\frac{Li_{-j_1,\ldots,-j_b}(1-e^{-m})}{(1-e^{m})^b}e^{-m}
\\\hspace*{0mm}=\sum\limits_{0<s_1<s_2<\cdots<s_b}{s_1}^{j_1}\times\cdots\times {s_b}^{j_b}(1-e^{-m})^{s_b-b}e^{-m}
$
This implies that
 \\\hspace*{0mm} $U^{(-j_1,\ldots,-j_b)}_n=\sum\limits_{0<s_1<s_2<\cdots<s_b}{s_1}^{j_1}\times\cdots\times {s_b}^{j_b}\frac{(-1)^{s_b-b+1}}{s_b-b+1}\sum\limits_{i=0}^{s_b-b+1}\binom{s_b-b+1}{i}(-1)^{s_b-b+1-i}\times\\\sum\limits_{n=0}^{\infty}(-i)^{n+1}=\sum\limits_{n=0}^{\infty}\sum\limits_{0<s_1<s_2<\cdots<s_b}{s_1}^{j_1}\times\cdots\times {s_b}^{j_b}\frac{(-1)^{s_b-b+1}}{s_b-b+1}\sum\limits_{i=0}^{s_b-b+1}\binom{s_b-b+1}{i}\times\\(-1)^{s_b-b+1-i}(-i)^{n+1}$. This implies that
  \\$U^{(-j_1,\ldots,-j_b)}_{n+4}-U^{(-j_1,\ldots,-j_b)}_n=\sum\limits_{n=0}^{\infty}\sum\limits_{0<s_1<s_2<\cdots<s_b}{s_1}^{j_1}\times\cdots\times {s_b}^{j_b}\frac{(-1)^{s_b-b+1}}{s_b-b+1}\times\\\sum\limits_{i=0}^{s_b-b+1}\binom{s_b-b+1}{i}(-1)^{s_b-b+1-i}[(-i)^{n+1+4}-(-i)^{n+1}]$.
  By lemma~\ref{lemma:1} the sequence $U^{(-j_1,\ldots,-j_b)}_n$ for $n\geq1$ has a four cycle.\end{proof}
  
  \begin{theorem}\label{theorem:3B}For fixed $j_1,j_2,\ldots,j_b\in\mathbb{Z}$
  \begin{center}
 $U^{(j_1,j_2,\ldots,j_b)}_n=(-1)^{n+1}\sum\limits_{s_b=b}^{n+b}\;\sum\limits_{0<s_1<\cdots<s_b}\frac{1}{s^{j_1}_1\times\cdots\times s^{j_b}_b}(-1)^{s_b-b+1}\times(s_b-b)!{{n+1}\brace{s_b-b+1}}$
  \end{center}
  \end{theorem}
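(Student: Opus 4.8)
The plan is to read off $U^{(j_1,\ldots,j_b)}_n$ as a coefficient in its defining generating function and reduce the whole identity to a single coefficient extraction from $(1-e^{-m})^{N}e^{-m}$. First I would expand the multi-poly-logarithm by its series and simplify the ratio exactly as in the proof of the preceding $U$-periodicity theorem, obtaining
$$\sum_{n=0}^{\infty}U^{(j_1,\ldots,j_b)}_n\frac{m^n}{n!}=\sum_{0<s_1<\cdots<s_b}\frac{1}{s_1^{j_1}\cdots s_b^{j_b}}(1-e^{-m})^{s_b-b}e^{-m}.$$
Because the outer sum ranges over an index set independent of $n$, it suffices to extract $\left[\frac{m^n}{n!}\right]$ term by term, so the theorem comes down to evaluating $\left[\frac{m^n}{n!}\right](1-e^{-m})^{N}e^{-m}$ with $N:=s_b-b$.

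The key step, and the one doing the real work, is to recognise the factor $e^{-m}$ as a derivative. Since $\frac{d}{dm}(1-e^{-m})^{N+1}=(N+1)(1-e^{-m})^{N}e^{-m}$, we have
$$(1-e^{-m})^{N}e^{-m}=\frac{1}{N+1}\,\frac{d}{dm}(1-e^{-m})^{N+1},$$
so that $\left[\frac{m^n}{n!}\right](1-e^{-m})^{N}e^{-m}=\frac{1}{N+1}\left[\frac{m^{n+1}}{(n+1)!}\right](1-e^{-m})^{N+1}$. I would then expand $(1-e^{-m})^{M}=(-1)^{M}(e^{-m}-1)^{M}$ through the classical Stirling identity $(e^{x}-1)^{M}=M!\sum_{p\ge M}{p\brace M}\frac{x^p}{p!}$ with $x=-m$, which gives $\left[\frac{m^p}{p!}\right](1-e^{-m})^{M}=(-1)^{M+p}M!{p\brace M}$. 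Taking $M=N+1$, $p=n+1$ and dividing by $N+1$ yields
$$\left[\frac{m^n}{n!}\right](1-e^{-m})^{N}e^{-m}=(-1)^{n+N}N!\,{n+1\brace N+1}.$$

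Finally I would substitute this back, set $N=s_b-b$, and collect signs: writing $(-1)^{n+N}=(-1)^{n+1}(-1)^{s_b-b+1}$ and pulling the $n$-dependent sign outside the double sum reproduces the stated expression. The summation range on $s_b$ is not an extra hypothesis but a consequence of the Stirling factor, since ${n+1\brace s_b-b+1}$ vanishes unless $0\le s_b-b+1\le n+1$, i.e. unless $b\le s_b\le n+b$, which is exactly the displayed range. I expect the only real delicacy to be the sign bookkeeping in the Stirling expansion and the verification that the derivative reduction is a legitimate identity of formal power series; the combinatorial weight ${n+1\brace s_b-b+1}$ then emerges automatically rather than having to be guessed in advance.
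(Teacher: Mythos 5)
Your proposal is correct and follows essentially the same route as the paper's own proof: expand the multi-poly-logarithm termwise, recognise $(1-e^{-m})^{s_b-b}e^{-m}$ as $\frac{1}{s_b-b+1}\frac{d}{dm}(1-e^{-m})^{s_b-b+1}$, and apply the Stirling-number expansion of $(e^x-1)^M/M!$ to read off the coefficient, with the truncation of the $s_b$-range coming from the vanishing of ${n+1\brace s_b-b+1}$. The only difference is presentational: you phrase the derivative step as a coefficient shift $\left[\frac{m^n}{n!}\right]f'(m)=\left[\frac{m^{n+1}}{(n+1)!}\right]f(m)$, whereas the paper differentiates the Stirling series term by term, and these are the same computation.
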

  
  \begin{proof}By definition $\sum\limits_{n=0}^{\infty} U^{(j_1,\ldots,j_b)}_n\frac{m^n}{n!}=\frac{Li_{j_1,\ldots,j_b}(1-e^{-m})}{(1-e^{-m})^b}e^{-m}\\\implies\sum\limits_{n=0}^{\infty} U^{(j_1,\ldots,j_b)}_n\frac{m^n}{n!}=\sum\limits_{0<s_1<s_2<\cdots<s_b}\frac{(1-e^{-m})^{s_b-b}e^{-m}}{{s_1}^{j_1}\times {s_2}^{j_2}\times\cdots\times {s_b}^{j_b}}
  \\=\sum\limits_{0<s_1<s_2<\cdots<s_b}\frac{1}{{s_1}^{j_1}\times {s_2}^{j_2}\times\cdots\times {s_b}^{j_b}}\frac{d}{dm}\frac{(1-e^{-m})^{s_b-b+1}}{s_b-b+1}$.
  \\Now  applying the identity $\sum\limits_{n=s}^{\infty}{n\brace s}=\frac{(e^m-1)^s}{s!}$ (see~\cite{book on Bernoulli numbers},pp~32)
\\ We have $\sum\limits_{n=0}^{\infty} U^{(j_1,\ldots,j_b)}_n\frac{m^n}{n!} 
\\=\sum\limits_{0<s_1<s_2<\cdots<s_b}\frac{(-1)^{s_b-b+1}}{{s_1}^{j_1}\times {s_2}^{j_2}\times\cdots\times {s_b}^{j_b}}\sum\limits_{n=s_b-b}(-1)^{n+1}{n+1\brace {s_b-b+1}}(s_b-b)!\times\frac{m^n}{n!}
\\=\sum\limits_{n=0}^{\infty}\begin{pmatrix}(-1)^{n+1}\sum\limits_{s_b=b}^{n+b}\sum\limits_{0<s_1<s_2<\cdots<s_b}\frac{(-1)^{s_b-b+1}}{{s_1}^{j_1}\times {s_2}^{j_2}\times\cdots\times {s_b}^{j_b}}{n+1\brace {s_b-b+1}}(s_b-b)!\end{pmatrix}\frac{m^n}{n!}$.\\

\end{proof}
The theorem is an analogue of theorem~7 of \cite{closed form for multi-poly-Bernoulli numbers}
\\\\\\\Large{\textbf{\underline{Acknowledgements:}}}\normalsize

The first author would like to thank the National research foundation of South Africa for financial support under the NRF-Innovation doctoral scholarship and Rhodes University for support under the Levenstein bursary.
 Both authors acknowledge support from Rhodes University.

\newpage  
\end{document}